\documentclass[a4paper,cleveref, autoref,numberwithinsect]{amsart}
\usepackage{graphicx}
\usepackage{color}
\usepackage{amsmath}
\usepackage{amssymb}
\usepackage{xspace}
\usepackage{epsfig}
\usepackage[dvipsnames]{xcolor}
\usepackage{soul}
\usepackage{xfrac}
\usepackage{enumitem}
\usepackage{hyperref}

\usepackage{algorithm}
\usepackage{algorithmicx}
\usepackage{algpseudocode}

\newcommand {\mm}[1] {\ifmmode{#1}\else{\mbox{\(#1\)}}\fi}

\newcommand{\ignore}[1]{}

\newsavebox{\smallProofsym}                 
\savebox{\smallProofsym}                               %
{
\begin{picture}(6,6)
\put(0,0){\framebox(6,6){}}
\put(0,2){\framebox(4,4){}}
\end{picture} 
}  

\makeatletter
\long\def\@makecaption#1#2{%
  \vskip\abovecaptionskip
  \sbox\@tempboxa{\small #1: #2}%
  \ifdim \wd\@tempboxa >\hsize
    \small #1: #2\par
  \else
    \global \@minipagefalse
    \hb@xt@\hsize{\hfil\box\@tempboxa\hfil}%
  \fi
  \vskip\belowcaptionskip}
\makeatother

\theoremstyle{plain}
\newtheorem*{namedthm}{\namedthmname}
\newcounter{namedthm}

\makeatletter

\makeatother

\theoremstyle{plain}
\newtheorem{theorem}{Theorem}[section]

\newtheorem{lemma}[theorem]{Lemma}

\theoremstyle{definition}
\newtheorem{definition}{Definition}


\theoremstyle{plain}

\newtheorem*{supermaintheorem*}{Main Theorem}

\newtheorem*{supermaincorollary*}{Main Corollary}

\newcommand{\Hgroup}[2]     {\mm{{\sf H}_{#1}{({#2})}}}

\newcommand{\Rspace}        {\mm{{\mathbb R}}}
\newcommand{\Zspace}        {\mm{{\mathbb Z}}}

\newcommand{\Depth}[2]      {\mm{{\rm Depth}_{#1}{({#2})}}}
\newcommand{\Succ}[3]       {\mm{\rm Succ}_{#1}^{#2}{({#3})}}
\newcommand{\Pred}[3]       {\mm{\rm Pred}_{#1}^{#2}{({#3})}}

\newcommand{\BR}            {\mm{{B}}}
\newcommand{\DR}            {\mm{{D}}}
\newcommand{\BD}[2]         {\mm{{{\it BD}}_{#1}{({#2})}}}
\newcommand{\Dgm}[2]        {\mm{{\rm Dgm}_{#1}{({#2})}}}
\newcommand{\dime}[1]       {\mm{\rm dim\,}{#1}}

\newcommand{\Skip}[1]       {}

\bibliographystyle{plainurl}

\title{The Depth Poset under Transpositions in the Filter}

\author[Edelsbrunner]{Herbert Edelsbrunner}
\address{ISTA (Institute of Science and Technology Austria), Kloster\-neu\-burg, Austria}
\email{herbert.edelsbrunner@ist.ac.at}

\author[Lipi\'{n}ski]{Micha{\l} Lipi\'{n}ski}
\address{ISTA (Institute of Science and Technology Austria), Kloster\-neu\-burg, Austria}
\email{michal.lipinski@ist.ac.at}

\author[Mrozek]{Marian Mrozek}
\address{Division of Computational Mathematics, Faculty of Mathematics and Computer \newline\indent Science, Jagiellonian University, Krak\'ow, Poland}
\email{marian.mrozek@uj.edu.pl}

\author[Soriano-Trigueros]{Manuel Soriano-Trigueros}
\address{
	\parbox{\linewidth}{Universidad de Sevilla, Seville, Spain\\
		ISTA (Institute of Science and Technology Austria), Kloster\-neu\-burg, Austria}
	}
\email{manuel.sorianotrigueros@ist.ac.at}

\author[Zimin]{Fedor Zimin}
\address{ISTA (Institute of Science and Technology Austria), Kloster\-neu\-burg, Austria}
\email{fedor.zimin@ist.ac.at}

%
%
%
%
%
%


\keywords{Algebraic topology, Lefschetz complexes, persistent homology, vines and vineyards, birth-death pairs, shallow pairs, relations, partial orders, transpositions.}

\thanks
{\footnotesize
The first author is partially supported by the DFG Collaborative Research Center TRR 109, Austrian Science Fund (FWF), grant no.\ {I 02979-N35.}
The second author acknowledges that this project has received funding from the European Union’s Horizon 2020 research and innovation programme under the Marie Skło\-dow\-ska-Curie Grant Agreement No.~101034413.
The third author is partially supported by Polish National Science Center under Opus Grant 2019/35/B/ST1/00874.
}

\def\corcommstyle#1{\bf\small\tt\textcolor{red}{#1}}
\def\corrc #1<<>>{
  \begin{quote} {\corcommstyle{ $<<$COMMENT$>>$ #1\marginpar{!!}}} \end{quote}
}

\begin{document}
\maketitle

\begin{abstract}
  The depth poset of a filtered Lefschetz complex reflects the dependencies between the cancellations of different shallow birth-death pairs.
  Using the fast algorithms for computing the depth poset in \cite{ELMS25} and for updating the persistence diagram under transpositions in \cite{CEM06}, we give a complete case analysis of how transpositions of cells in the filter affect the depth poset.
  In addition, we present statistics on the depth poset for random point data and its sensitivity to the transpositions that occur in random straight-line homotopies.
\end{abstract}

\section{Introduction}
\label{sec:1}

The field of \emph{topology optimization} aims at the design of shapes whose large- and small-scale connectivity is subject to modifications applied in order to satisfy constraints and optimize objective functions.
The traditional approach is numerical and optimizes through repeated local improvement; see the text by Bends{\o}e~\cite{Ben95} and the major revision by Bends{\o}e and Sigmund~\cite{BeSi04}.
More recently, the methods of topological data analysis have matured and provide an alternative approach to this seasoned problem; see e.g.\ the ``big steps'' algorithm by Nigmetov and Morozov~\cite{NiMo24}, which is closely related to the work presented in this paper.
Specifically, we aim at exploring the space of shapes that relate to each other through the cancellation and interchange of the critical points of a function.
This is naturally related to the simplification of functions and shapes, as studied in \cite{AGHLM09,BLW11,EMP06}, but goes beyond this work by casting light on the space of possible simplifications.

\smallskip
A second motivation for the presented work is the development of a full-fledged discrete theory of combinatorial dynamics.
Being rooted in the discrete approach to Morse theory pioneered by Forman~\cite{For98,For98b}, the overarching goal is to provide discrete counterparts for classic continuous concepts, such as the Conley index and the Morse decomposition; see \cite{LKMW23,Mro17} for important steps in this direction.

\smallskip
The primary technical prerequisites for our work are the depth poset of a filter on a complex, as introduced in \cite{ELMS25}, the well established concept of persistent homology; see the text by Edelsbrunner and Harer~\cite{EdHa10}, and the vineyard algorithm for computing persistence along a parametrized family of functions, as originally described in \cite{CEM06}.
The depth poset collects all dependencies between cancellations of \emph{shallow birth-death pairs} (formerly known as \emph{apparent pairs} \cite{Bau21} and \emph{close pairs} \cite{DRS15}) and thus provides a global yet discrete representation of the space of shapes reachable by sequences of such cancellations.
We mention that shallow pairs appeared in other seemingly unrelated contexts, for example in fast adaptive sorting \cite{RSWa24}.
Our main contribution is the complete case analysis of how transpositions in the filter affect the depth poset.
Working toward establishing the depth poset as a first class object, we also present statistics on the size and shape of this poset and its variation under transpositions.

\medskip \noindent \textbf{Outline.}
Section~\ref{sec:2} explains the technical background needed to describe our results.
Section~\ref{sec:3} gives the complete case analysis of how a transposition in the filter affects the depth poset of the same.
Section~\ref{sec:4} presents statistical results that probe the depth poset and homotopies between them for random PL height functions on Delaunay mosaics of Poisson point processes.

\section{Background}
\label{sec:2}

This section presents the background we need to study the relations and orders in which our results are coached.
Besides standard material, we need concepts from persistent homology and its matrix reduction algorithms.
We refer to \cite{EdHa10} for more comprehensive background.

\medskip \noindent \textbf{Relations.}
We adapt the terminology from graph theory, in which a relation consists of \emph{(directed) arcs} that connect \emph{nodes}.
Each arc goes from its \emph{source} to its \emph{target}, both of which are nodes.
A \emph{(directed) path} is a sequence of arcs such that the target of each arc is the source of the next arc, and it goes from the source of the first arc to the target of the last arc.
It is a \emph{directed cycle} if the target of the last arc is also the source of the first arc.
The relation is \emph{acyclic} if it has no directed cycle.
Note however that an acyclic relation may have an \emph{undirected cycle}; that is: two disjoint paths that connect the same two nodes.

\smallskip
An acyclic relation has a unique \emph{transitive closure}, which is the possibly larger (acyclic) relation that has an arc from a source to a target whenever there is such a path.
A \emph{partial order} is an acyclic relation that is its own transitive closure, and in situations where we wish to emphasize the nodes rather than the arcs, we refer to it as a \emph{partially ordered set}, or a \emph{poset}, for short.
Two nodes in a poset are \emph{comparable} if there is an arc that connects them, and we write $a \leq b$ if $a$ is the source and $b$ is the target of the connecting arc.
A \emph{total order} is a partial order in which any two nodes are comparable.
It is a \emph{linear extension} of a partial order on the same nodes if $a \leq b$ in the partial order implies $a \leq b$ in the total order.
\begin{definition}
  \label{dfn:face_relation}
  The nodes of the \emph{face relation} of a complex are its cells, with an arc from a cell, $a$, to another cell, $b$, if $a$ is a face of $b$ and $\dime{a} = \dime{b} - 1$.
  We call $a$ a \emph{facet} of $b$ and $b$ a \emph{cofacet} of $a$.
  A \emph{Lefschetz complex} avoids the complications that come with concrete geometric descriptions of the cells by treating them as abstract entities.
  It lists the facets of a cell but does not specify how they are attached to the cell.
\end{definition}
We refer to \cite{ELMS25} for a more formal definition of a Lefschetz complex, a concept which was introduced first under the more modest name of a \emph{complex} in the book by Lefschetz~\cite{Lef42}.
A function on a complex, $f \colon K \to \Rspace$, that satisfies $f(a) < f(b)$ whenever $a \prec b$ is called a \emph{filter} of $K$.
The ordering in which $a$ precedes $b$ iff $f(a) < f(b)$ is then a linear extension of the face relation; see the left panel of Figure~\ref{fig:Example}.
\begin{figure}[t]
  \centering \vspace{0.05in}
  \resizebox{!}{1.8in}{\input{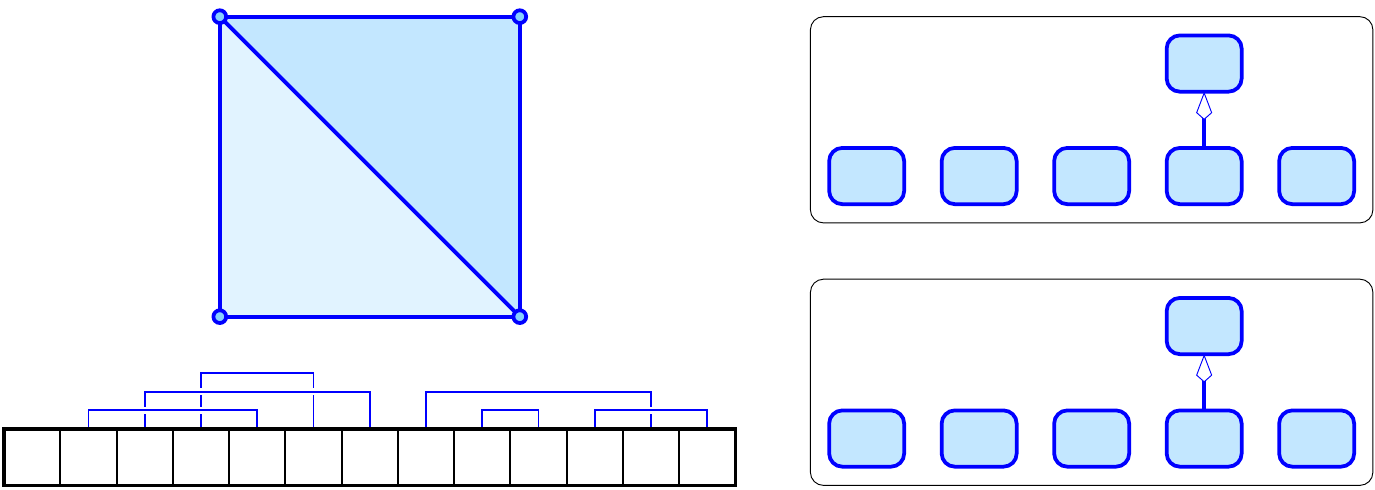_t}}
  \vspace{-0.05in}
  \caption{{\footnotesize \emph{Upper left:} a complex with four vertices, five edges, three triangles ($\alpha, \beta, \gamma$, in which $\alpha$ and $\beta$ have the same boundary), and one $3$-cell ($\Sigma$, which is sandwiched between $\alpha$ and $\beta$).
  \emph{Lower left:} the ordering implied by a filter, with birth-death pairs as indicated (see definition below).
  \emph{Right:} the death and birth relations on the birth-death pairs as computed by Algorithms~1 and 2 below.}
	\vspace{-0.5cm}
}
  \label{fig:Example}
\end{figure}

\medskip \noindent \textbf{Persistent Homology and Transpositions.}
Given a filter on a complex, $f \colon K \to \Rspace$, we construct the complex by adding one cell at a time following the implied order.
After adding $i$ cells, we arrive at a subcomplex $K_i \subseteq K$, and we refer to the resulting sequence of subcomplexes as a \emph{filtration} of $K$.
Fixing a dimension, $p$, and a coefficient field, e.g.\ $\Zspace / 2 \Zspace$, we apply the degree-$p$ homology functor to each subcomplex and thus get a linear sequence of vector spaces connected by linear maps:
\begin{align}
  \Hgroup{p}{\emptyset} = \Hgroup{p}{K_0} \to \Hgroup{p}{K_1} \to \ldots \to \Hgroup{p}{K_n} = \Hgroup{p}{K} ;
\end{align}
see e.g.\ \cite{Hat02} for the needed background in algebraic topology.
Whenever the added cell has dimension $p$, it either gives \emph{birth} to a degree-$p$ homology class or it gives \emph{death} to a degree-$(p-1)$ homology class.
This is classic Morse theory, and the novelty in persistence is that for each death-giving $(p+1)$-cell we find a unique birth-giving $p$-cell whose class it ends.
We call these the degree-$p$ \emph{birth-death pairs} of $f$, denoted $\BD{p}{f}$.
It is also possible that some birth-giving $p$-cells do not get paired, and they reflect the degree-$p$ homology of $K$.
A convenient combinatorial representation of this information is the \emph{persistence diagram}, denoted $\Dgm{p}{f}$, which maps $(a,b) \in \BD{p}{f}$ to the point $(f(a), f(b)) \in \Rspace^2$.

\smallskip
An important property is the stability of the persistence diagram, originally proved in \cite{CEH07}.
In a nutshell, it says that for two monotonic functions, $f, g \colon K \to \Rspace$, there is a bijection between $\Dgm{p}{f}$ and $\Dgm{p}{g}$ such that the maximum coordinate difference between corresponding points is bounded from above by the absolute maximum of $f(a)-g(a)$ over all cells $a \in K$.
This will be instrumental in the following construction.

\smallskip
Consider two filters, $f_0 , f_1 \colon K \to \Rspace$, and the straight-line homotopy defined by $f_\lambda (a) = (1-\lambda) f_0(a) + \lambda f_1(a)$ for $0 \leq \lambda \leq 1$.
These functions imply a $1$-parameter family of persistence diagrams for each dimension $p$.
Mapping every point $(f_\lambda(a), f_\lambda(b)) \in \Dgm{p}{f_\lambda}$ to $(f_\lambda(a), f_\lambda(b), \lambda) \in \Rspace^3$, we effectively stack up these diagrams, and because of stability, the points form continuous curves in $\Rspace^3$.
Following the terminology in \cite{CEM06}, we call these curves \emph{vines} and the collection of vines a \emph{vineyard}.
Assuming $K$ is finite, there are only finitely many values of $\lambda$ for which $f_\lambda$ is not a filter (because it assigns the same value to at least two cells).
These values split $[0,1]$ into finitely many open intervals with a unique ordering of the cells in each.
In the generic case, the orderings in any two consecutive intervals differ by a single \emph{transposition} of two adjacent cells.
Such a transposition may or may not affect the birth-death pairs, and if it does, we call it a \emph{switch}.
As established in \cite{CEM06}, there are only three kinds of switches: the \emph{BB-type}, which swaps the birth-giving cells between two pairs, the \emph{DD-type}, which swaps the death-giving cells between two pairs, and the \emph{BD-type}, which swaps a birth-giving with a death-giving cell; see Figure~\ref{fig:switches}.
Observe that in the first two cases, the two affected points belong to the same-degree persistence diagram, while in the third case the affected points belong to persistence diagrams of consecutive degrees.
\begin{figure}[bt]
  \centering \vspace{0.0in}
  \resizebox{!}{1.3in}{\input{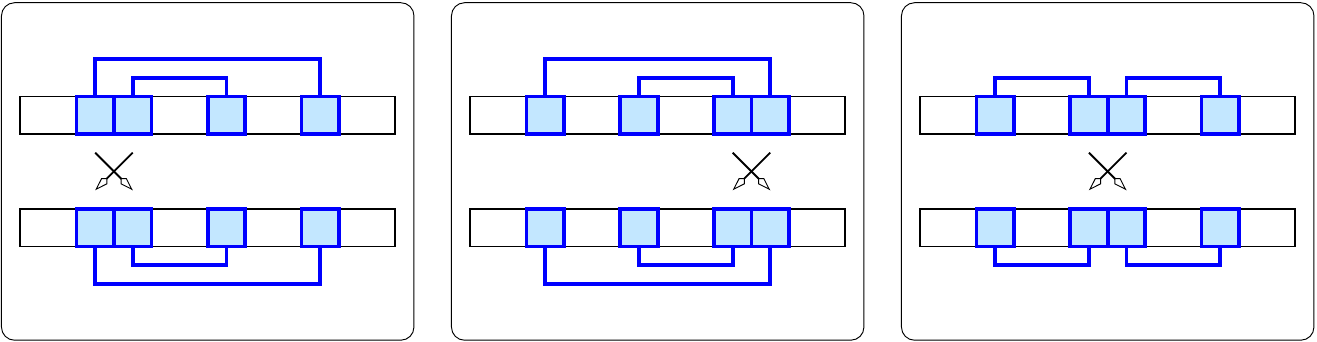_t}}
  \vspace{-0.05in}
  \caption{{\footnotesize The three types of switches.
  \emph{From left to right:} swapping two birth-giving cells (BB-type), swapping two death-giving cells (DD-type), and swapping a birth-giving with a death-giving cell (BD-type).
  In the first two cases, the intervals that correspond to the pairs are nested---before and after the switch---and in the third case, they are disjoint---again before and after the switch.}
	\vspace{-0.3cm}
}
  \label{fig:switches}
\end{figure}

\medskip \noindent \textbf{Cancellation of Shallow Pairs.}
Given a filtered Lefschetz complex, we are interested in simplifying it.
The natural operation to this end is the \emph{cancellation} of a cell with one of its facets.
Letting $a \prec b$ be such a pair, the operation removes $a$ and $b$ and adds an arc from every facet of $b$ to every cofacet of $a$ in the face relation; see Figure~\ref{fig:cancellation}.
In the process, a cell may become the facet of another cell a multiple of times, which for $\Zspace / 2 \Zspace$ coefficients means it is not a facet if this multiple is even.
\begin{figure}[hbt]
  \centering \vspace{0.05in}
  \resizebox{!}{1.0in}{\input{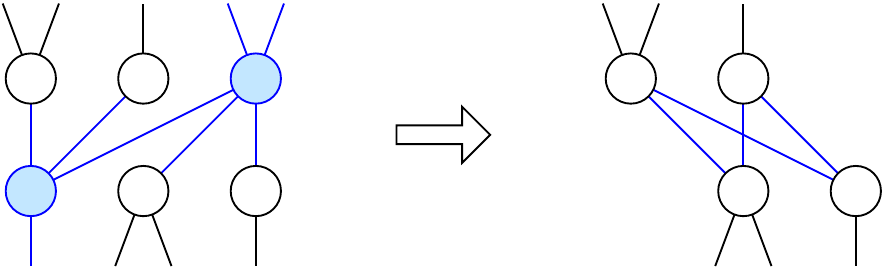_t}}
  \vspace{-0.05in}
  \caption{{\footnotesize To cancel $a \prec b$, we connect all facets of $b$ to all cofacets of $a$.
  The remainder of the face relation is unchanged.}
	\vspace{-0.2cm}
}
  \label{fig:cancellation}
\end{figure}

A cancellation may have side-effects, such as making a cell a facet of another cell that precedes it in the filter.
To avoid the need to reorganize the filter, we restrict ourselves to cancellations where this is not necessary.
Even more, we restrict ourselves to canceling \emph{shallow pairs}, $a \prec b$, for which $a$ is the last facet of $b$ and $b$ is the first cofacet of $a$ in the ordering of the filter; see \cite{ELMS25} but also \cite{Bau21}, where they are referred to as \emph{apparent pairs}.
If $a \prec b$ is a shallow pair, then all other facets of $b$ precede $a$ and all other cofacets of $a$ succeed $b$, so the new filter is obtained by removing $a$ and $b$, and no reordering is necessary.
The following basic properties are either obvious or proved in \cite{ELMS25}:
\begin{itemize}
  \item every shallow pair is a birth-death pair of the filter;
  \item after canceling a shallow pair, the birth-death pairs of the new filter are the remaining birth-death pairs of the old filter;
  \item there is at least one shallow pair, unless the face relation is empty.
\end{itemize}
Because of the third property, we can keep canceling shallow pairs until the face relation runs empty.
We call the resulting sequence of canceled pairs a \emph{shallow cancellation order}, which by the first two properties is a total order of the birth-death pairs of the original filter.
However, there may be many orderings of the birth-death pairs such that each pair is shallow at the time it gets canceled, which motivates the next concept.
\begin{definition}
  \label{dfn:depth_poset}
  The \emph{depth poset} of a filter on a Lefschetz complex, $f \colon K \to \Rspace$, is the intersection of all shallow cancellation orders of $f$, denoted $\Depth{}{f}$.
\end{definition}
This concept was introduced in \cite{ELMS25}, along with some of its fundamental properties, which we state without proof.
\begin{lemma}
  \label{lem:properties_of_depth_poset}
  Let $f \colon K \to \Rspace$ be a filter on a Lefschetz complexs, $\varphi =(x,y)$ and $\psi = (a,b)$ two birth-death pairs of $f$, $(\varphi, \psi)$ an arc in $\Depth{}{f}$, $\sigma = (s,t)$ a shallow pair of $f$, and $f' \colon K \setminus \{s,t\}$ the filter after canceling $\sigma$.
  Then
  \begin{enumerate}
    \item nodes connected by an arc in $\Depth{}{f}$ are nested; that is: $f(a) < f(x) < f(y) < f(b)$;
    \item arcs of $\Depth{}{f}$ respect dimensions; that is: $\dime{a} = \dime{x}$ and $\dime{b} = \dime{y}$;
    \item canceling $\sigma$ has no side-effects; that is: $\Depth{}{f'} = \Depth{}{f} \cap [\BD{}{f'} \times \BD{}{f'}]$.
  \end{enumerate}
\end{lemma}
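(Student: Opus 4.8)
The plan is to read $\Depth{}{f}$ literally as an intersection of total orders: an arc $(\varphi,\psi)$ means that $\varphi$ is canceled before $\psi$ in \emph{every} shallow cancellation order, whereas $\varphi$ and $\psi$ are incomparable precisely when some order cancels $\varphi$ first and another cancels $\psi$ first. The backbone of all three parts is one local observation about the cancellation operation of Figure~\ref{fig:cancellation}: canceling a pair $(s,t)$ with $\dime{s}=d$ only toggles face-relation arcs between $d$-cells (the facets of $t$) and $(d+1)$-cells (the cofacets of $s$), so it can alter the shallowness status only of pairs whose birth cell has dimension $d$. I would record this as a preliminary claim, together with its consequence that cancellations of pairs of \emph{distinct} birth-dimension commute: canceling one preserves the shallowness of the other and the two operations act on disjoint arc-levels, the only borderline cases (adjacent dimension ranges) being settled by the last-facet / first-cofacet inequalities of the two shallow pairs.

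Part~(2) is then immediate. Adjacent transpositions of different birth-dimension are order-changes between valid shallow cancellation orders, so by bubble-sorting I can rearrange any order to cancel all pairs of one birth-dimension before all pairs of another while preserving the relative order within each dimension. If $\dime{a}\neq\dime{x}$, performing this for the two relevant dimensions in either sequence yields one order with $\varphi$ before $\psi$ and another with $\psi$ before $\varphi$; hence $\varphi$ and $\psi$ are incomparable, contradicting the arc. So every arc forces $\dime{a}=\dime{x}=:p$, and then $\dime{b}=\dime{a}+1=\dime{x}+1=\dime{y}$.

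For part~(1) I would first characterize the covering arcs of $\Depth{}{f}$ directly in terms of the face relation of $f$ --- concretely, the death and birth relations that Algorithms~1 and~2 compute. On these generators the nesting $f(a)<f(x)<f(y)<f(b)$ follows from the defining inequalities of a shallow pair (that $a$ be the last facet of $b$ and $b$ the first cofacet of $a$) together with the way a single cancellation reroutes arcs between facets and cofacets. Since nesting is preserved under composition --- chaining $f(a)<f(x)<f(y)<f(b)$ with a further nested pair and using transitivity of $<$ --- the property passes to the transitive closure, i.e.\ to every arc of the poset.

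Part~(3) starts from the basic property $\BD{}{f'}=\BD{}{f}\setminus\{\sigma\}$ and the bijection, given by prepending or deleting $\sigma$, between shallow cancellation orders of $f'$ and those of $f$ that begin with $\sigma$; this identifies $\Depth{}{f'}$ with the intersection of the $\sigma$-first orders restricted to $\BD{}{f'}$. One inclusion is then free, since intersecting over the larger family of all orders of $f$ can only add arcs: $\Depth{}{f}\cap[\BD{}{f'}\times\BD{}{f'}]\subseteq\Depth{}{f'}$. The reverse inclusion is the crux and amounts to promoting $\sigma$ to the front of an arbitrary shallow cancellation order $O$ without disturbing the relative order of the remaining pairs; any necessary precedence that holds in all $\sigma$-first orders then holds in $O$ as well. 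Because $\sigma$ is shallow in the original complex I may cancel it first, and the work is to verify that each subsequent pair of $O$ is still shallow once the extra pair $\sigma$ has been removed. I expect this exchange step to be the main obstacle: it needs the full case analysis of how canceling $\sigma$ modifies the facets of the death cells and the cofacets of the birth cells of the remaining pairs, using the degree-locality above to confine any interference to $\sigma$'s own dimension and the shallowness inequalities to show that the toggled arcs always fall on the harmless side of the last-facet and first-cofacet thresholds. The fact that the cancellation operation itself commutes as a map on face relations, so that the underlying complex is independent of the order, is a routine verification I would dispatch separately.
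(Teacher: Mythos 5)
First, for calibration: the paper does not prove this lemma at all---it states it without proof and imports it from \cite{ELMS25}---so your proposal has to stand on its own. Parts (1) and (3) of your plan are essentially sound. For (3) in particular, your promotion argument only ever needs the \emph{easy} direction of interference: $\sigma$ is shallow from the very start, so you need only that canceling one shallow pair keeps every other shallow pair shallow, and your observation that ``the toggled arcs fall on the harmless side of the thresholds'' is exactly right---if $t$ gains a new facet $u$ through the cancellation of a shallow $(a',b')$, then $t$ is a cofacet of $a'$ and $u$ a facet of $b'$, so $f(u) < f(a') < f(s)$ because $a'$ is itself a facet of $t$ preceding its last facet $s$; deletions never break last-facet/first-cofacet status. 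Together with order-independence of the resulting complex (essentially Lemma~\ref{lem:independent_of_order}), this carries part (3).

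The genuine gap is in part (2), at the bubble-sort step. Your preliminary degree-locality claim is false as stated: canceling $(s,t)$ with $\dime{s}=d$ also \emph{deletes} $s$ from the cofacet lists of $(d-1)$-cells and $t$ from the facet lists of $(d+2)$-cells, and this is precisely how pairs of birth-dimension $d\mp 1$ can \emph{become} shallow only after the cancellation. Your swap therefore needs the hard direction: if $(u,v)$ with $\dime{u}=d-1$ is shallow after canceling $(s,t)$, it was already shallow before. The only obstruction is that $s$ is a cofacet of $u$ with $f(s)<f(v)$, and this is \emph{not} excluded by ``the last-facet / first-cofacet inequalities of the two shallow pairs'', as you assert: those inequalities constrain $s$'s cofacets and $t$'s facets and say nothing about $s$'s position among the cofacets of $u$. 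What you actually need is that a birth-giving cell is never the first cofacet of any of its facets (dually, a death-giving cell is never the last facet of any of its cofacets): since $s$ gives birth, there is a $\Zspace/2\Zspace$ cycle $z \ni s$ with all cells of value at most $f(s)$, and $\partial z = 0$ forces another $s' \in z$ with $u \prec s'$ and $f(s') < f(s) < f(v)$; this $s'$ survives the cancellation and contradicts $(u,v)$ being shallow afterwards. Without this cycle/cocycle argument (or an equivalent), the swap can in principle be blocked and your reordering in part (2) is unsupported. A cleaner repair is available inside the paper itself: derive (2)---and make the generator step of your part (1) precise at the same time---from Theorem~\ref{thm:relations_combine_to_depth_poset}, noting that Algorithms~\ref{alg:column_reduction} and~\ref{alg:row_reduction} only ever add columns to columns and rows to rows of equal dimension (so arcs of $\DR(f)$ and $\BR(f)$ respect dimension), and that the pivot of column $b$ lies strictly above the pivot of column $y$ whenever column $y$ is added to column $b$ (which gives nesting); both properties pass to the transitive closure.
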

By Property~2, $\Depth{}{f}$ is the disjoint union of posets $\Depth{p}{f} \subseteq \BD{p}{f} \times \BD{p}{f}$.

\medskip \noindent \textbf{Matrix Reduction Twice.}
While the definition of the depth poset is constructive, it is in terms of possibly exponentially many total orders on the birth-death pairs.
Alternatively, it can be constructed as the transitive closure of two relations, both of which can be computed efficiently by reducing the boundary matrix of the complex.
We review the algorithms and refer to \cite{ELMS25} for a proof of correctness.
\begin{definition}
  \label{dfn:death_relation}
  The \emph{death relation} of a filter on a Lefschetz complex, $f \colon K \to \Rspace$, denoted $\DR (f) \subseteq \BD{}{f} \times \BD{}{f}$, records the left-to-right column operations during the reduction of the boundary matrix in reverse order of the birth-giving cells; see Algorithm~1.
\end{definition}
\begin{algorithm}[hbt]
  \caption{Bottom to Top Column Reduction}\label{alg:column_reduction}
  \begin{algorithmic}[1]
	\State{$R_1 = \Delta$; $U_1 = {\rm Id}$;}
	\While {$R_1 \neq 0$}
            \State{let $R_1[x,y]$ be the leftmost non-zero entry in the lowest non-zero row;}
		\While{$\exists b > y$ such that $R_1[x,b] = 1$}
            \State{add column $y$ to column $b$ in $R_1$;
            $U_1[y,b] = U_1[y,b]+1$}
            \EndWhile;
        \State{delete row $x$ and column $y$ from $R_1$}
	\EndWhile.
  \end{algorithmic}
\end{algorithm}
The arcs in the death relation correspond to the off-diagonal non-zero entries in the columns of death-giving cells in $U_1$:
\begin{align}
  \DR(f) &= \{ \left( (x,y) , (a,b) \right) \in \BD{}{f} \times \BD{}{f} \mid y \neq b {\rm ~and~}U_1[y,b] = 1\} .
\end{align}
To rationalize $U_1$, consider the matrix $V_1$ that stores the chains whose boundaries are the cycles in $R_1$.
Initially, $V_1$ is the identity matrix and it is maintained by undergoing the same column operations as $R_1$.
If we ignore the deletions of rows and columns, we retain the matrices in their original sizes, and it is not difficult to see that they satisfy $R_1 = \Delta V_1$ and $\Delta = R_1 U_1$ throughout the algorithm, so $U_1 = V_1^{-1}$.
\begin{definition}
  \label{dfn:birth_relation}
  The \emph{birth relation} of a filter on a Lefschetz complex, $f \colon K \to \Rspace$, denoted $\BR (f) \subseteq \BD{}{f} \times \BD{}{f}$, records the bottom-to-top row operations during the reduction of the boundary matrix in the order of the death-giving cells; see Algorithm~2.
\end{definition}
\begin{algorithm}[hbt]
  \caption{Left to Right Row Reduction}\label{alg:row_reduction}
  \begin{algorithmic}[1]
	\State{$R_2 = \Delta$; $U_2 = {\rm Id}$;}
	\While {$R_2 \neq 0$}
            \State{let $R_2[x,y]$ be the lowest non-zero entry in the leftmost non-zero column;}
		\While{$\exists a < x$ such that $R_2[a,y] = 1$}
            \State{add row $x$ to row $a$ in $R_2$; 
            $U_2[a,x] = U_2[a,x] + 1$}
            \EndWhile;
        \State{delete row $x$ and column $y$ from $R_2$}
	\EndWhile.
  \end{algorithmic}
\end{algorithm}
The arcs in the birth relation correspond to the off-diagonal non-zero entries in the rows of birth-giving cells in $U_2$:
\begin{align}
  \BR(f) &= \{ \left( (x,y), (a,b) \right) \in \BD{}{f} \times \BD{}{f} \mid x \neq a {\rm ~and~} U_2[a,x] = 1 \}.
\end{align}
To rationalize $U_2$, we introduce the matrix $V_2$ that stores the chains whose coboundaries are the cocycles, i.e.\ $R_2 = V_2 \Delta$ assuming we ignore the deletions of rows and columns.
Similarly $\Delta = U_2 R_2$, so $U_2 = V_2^{-1}$.

\smallskip
The right panel of Figure~\ref{fig:Example} shows examples of the death and birth relations of a filter.
A noteworthy feature of this example are the pairs $({\tt d}, \gamma)$ and $(\beta, \Sigma)$.
In spite of the fact that {\tt d} is an edge of $\beta$, which prevents $({\tt d}, \gamma)$ from being shallow, there is no arc connecting the two pairs in the depth poset.
Indeed, the two pairs are neither nested nor do their dimensions match, so an arc would contradict the first two properties in Lemma~\ref{lem:properties_of_depth_poset}.
Hence, it must be possible to cancel $({\tt d}, \gamma)$ before canceling $(\beta, \Sigma)$, and this can indeed be done if we first cancel $({\tt e}, \alpha)$, which is the sole predecessor of $({\tt d}, \gamma)$ in the poset.
Geometrically, the cancellation of $({\tt e},\alpha)$ attaches $\beta$ to {\tt d} twice, which in modulo-$2$ arithmetic is counted as no attachment.

\smallskip
It is not difficult to see that the two relations are neither necessarily their own transitive closures nor necessarily their own transitive reductions, and while they express different pieces of information, the are not necessarily disjoint.
Nevertheless, they determine the depth poset as the transitive closure of their union; see \cite{ELMS25} for a proof.
This is important so we state the claim more formally.
\begin{theorem}
 \label{thm:relations_combine_to_depth_poset}
 Let $f \colon K \to \Rspace$ be a filter on a Lefschetz complex.
 Then $\Depth{}{f}$ is the transitive closure of $\DR(f) \cup \BR(f)$.
\end{theorem}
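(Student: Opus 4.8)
The plan is to show that the two relations together pin down exactly the constraints that every shallow cancellation order must obey, and nothing more. Write $P$ for the transitive closure of $\DR(f) \cup \BR(f)$. Since $\Depth{}{f}$ is by definition the intersection of all shallow cancellation orders, and since every poset is the intersection of its own linear extensions (the Dushnik--Miller/Szpilrajn corollary), it suffices to prove the single combinatorial identity that \emph{the shallow cancellation orders of $f$ are precisely the linear extensions of $P$}. Granting this, $\Depth{}{f} = \bigcap \{\text{shallow cancellation orders}\} = \bigcap \{\text{linear extensions of } P\} = P$, which is the claim. This reformulation is attractive because it converts a statement about an intersection of exponentially many total orders into two local structural facts about how the relations behave at a single cancellation step.

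To establish the identity I would isolate two lemmas and then induct on the number of birth-death pairs. The first lemma, which I expect to carry the real weight, asserts that $\sigma = (a,b)$ is a shallow pair of $f$ if and only if $\sigma$ is a minimal element of $P$; equivalently, $\sigma$ receives no incoming arc in either $\DR(f)$ or $\BR(f)$. One direction is comparatively direct: if $a$ is the last facet of $b$ and $b$ is the first cofacet of $a$, then $(a,b)$ is an already-placed pivot, so the bottom-to-top reduction performs no column addition into column $b$ and the left-to-right reduction performs no row addition into row $a$, whence $U_1[\,\cdot\,,b]$ and $U_2[a,\,\cdot\,]$ carry no off-diagonal $1$ and $\sigma$ has no incoming arc. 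The reverse direction---no incoming arc implies shallow---is where the work lies: arguing contrapositively, if $(a,b)$ is not shallow then some facet of $b$ succeeds $a$ or some cofacet of $a$ precedes $b$, and I would trace how this forces Algorithm~1 to record a column addition into $b$ (resp.\ Algorithm~2 a row addition into $a$), producing the requisite off-diagonal $1$ in $U_1$ (resp.\ $U_2$) and hence an incoming arc. Handling the processing order carefully, so that an apparent pivot is never disturbed by operations performed on lower rows or earlier columns, is the delicate point.

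The second lemma says that canceling a shallow pair $\sigma$ restricts both relations to the surviving pairs: $\DR(f') = \DR(f) \cap [\BD{}{f'} \times \BD{}{f'}]$ and likewise for $\BR$, so that $\mathrm{tc}(\DR(f') \cup \BR(f'))$ equals $P$ with $\sigma$ deleted. This is the matrix-bookkeeping step. Because a shallow pair is a pivot in place that requires no reduction operation, I would show that removing the cell pair $(a,b)$ amounts to deleting exactly one row and one column from each of $R_1, U_1, R_2, U_2$ while leaving all remaining entries---and therefore all recorded operations---untouched. Property~3 of Lemma~\ref{lem:properties_of_depth_poset} is the geometric shadow of this statement and can be invoked to certify that the cancellation has no side effects, so that the deletion genuinely decouples $\sigma$ from the rest of the relation rather than merely projecting it away.

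With both lemmas in hand the induction is routine. A shallow cancellation order cancels a shallow pair, which by the first lemma is a minimal element of $P$, and then by the second lemma proceeds with a shallow cancellation order of $f'$ governed by exactly $P$ minus that element; this is precisely the recursive description of a linear extension of $P$, so the two classes of orderings coincide and the identity follows. The main obstacle is thus concentrated in the reverse direction of the first lemma---translating the purely combinatorial failure of shallowness into a guaranteed off-diagonal $1$ in $U_1$ or $U_2$---since this is the point at which the algorithmic definitions of the death and birth relations must be reconciled with the last-facet/first-cofacet characterization of shallow pairs.
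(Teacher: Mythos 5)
The paper itself contains no proof of Theorem~\ref{thm:relations_combine_to_depth_poset}---it is stated with a pointer to \cite{ELMS25}---so your proposal cannot be checked against an in-paper argument; judged on its own terms, the architecture is sound: reduce the theorem, via the Szpilrajn-type fact that a finite poset is the intersection of its linear extensions, to the identity ``shallow cancellation orders $=$ linear extensions of $P$,'' and prove that identity by induction from (A) shallow pairs are exactly the minimal nodes of $P$, and (B) canceling a shallow pair restricts $\DR$ and $\BR$ to the surviving pairs. Both lemmas are true, and your sketch of (A) is essentially right in both directions; note only that the reverse direction additionally needs that Algorithms~\ref{alg:column_reduction} and~\ref{alg:row_reduction} compute the standard persistence pairing (so the pivot of column $b$ must end in row $a$, forcing at least one recorded addition into column $b$, resp.\ row $a$, and each such addition is recorded against a pivot column or row of a genuine birth-death pair), which is exactly what \cite{ELMS25} supplies. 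Two small points worth making explicit: the passage from $\DR(f')=\DR(f)\cap[\BD{}{f'}\times\BD{}{f'}]$ to ``$\mathrm{tc}$ of the restriction equals the restriction of $P$'' holds only because the removed node is minimal, hence lies on no path between survivors; and acyclicity of $P$, which your linear-extension machinery presupposes, falls out of the same induction since at every stage a shallow (hence, by (A), minimal) pair exists.

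The one place where your justification, as written, would fail is Lemma (B). Canceling $\sigma=(s,t)$ is not the deletion of a row and a column of $\Delta$: it is a rank-one update that toggles the incidence between every facet of $t$ and every cofacet of $s$ (Figure~\ref{fig:cancellation}), so ``leaving all remaining entries untouched'' is false for the input matrices, and the runs of Algorithm~\ref{alg:column_reduction} on $\Delta$ and on the updated matrix differ in their intermediate states. What is true---and what you must actually prove---is a commutation statement: because $s$ is the last facet of $t$ and $t$ the first cofacet of $s$, column $t$ has no entries below row $s$ and row $s$ none left of column $t$, so every pivot processed before $(s,t)$ triggers identical additions in both runs, and after the $(s,t)$ elimination the matrices coincide, whence the recorded off-diagonal entries of $U_1$ (and symmetrically $U_2$) between surviving pairs agree. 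This extends Lemma~\ref{lem:independent_of_order} from the reduced boundary matrices, which is all that lemma asserts, to the $U$-matrices, and that extension is where the work of (B) lives. Finally, your fallback of ``invoking'' Property~3 of Lemma~\ref{lem:properties_of_depth_poset} cannot certify (B): that property concerns the transitive closure $\Depth{}{f}$, not $\DR$ and $\BR$ individually, and inside your induction it would be circular, since the link between $\Depth{}{f'}$ and the transitive closure of $\DR(f')\cup\BR(f')$ is precisely the inductive hypothesis.
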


\section{Case Analysis}
\label{sec:3}

A transposition preserves all birth-death pairs, unless it is a switch, in which case the two transposed cells replace each other in their respective pairs.
Consider for example the filter in Figure~\ref{fig:Example} and entertain what happens when we transpose the edges $\tt c$ and $\tt d$.
Comparing Figures~\ref{fig:Example} and \ref{fig:Example-2}, we note that $({\tt C}, {\tt c})$ and $({\tt d}, \gamma)$ got replaced by $({\tt C}, {\tt d})$ and $({\tt c}, \gamma)$, and the death and birth relations both lost their only arc.
\begin{figure}[hbt]
  \centering \vspace{0.05in}
  \resizebox{!}{1.70in}{\input{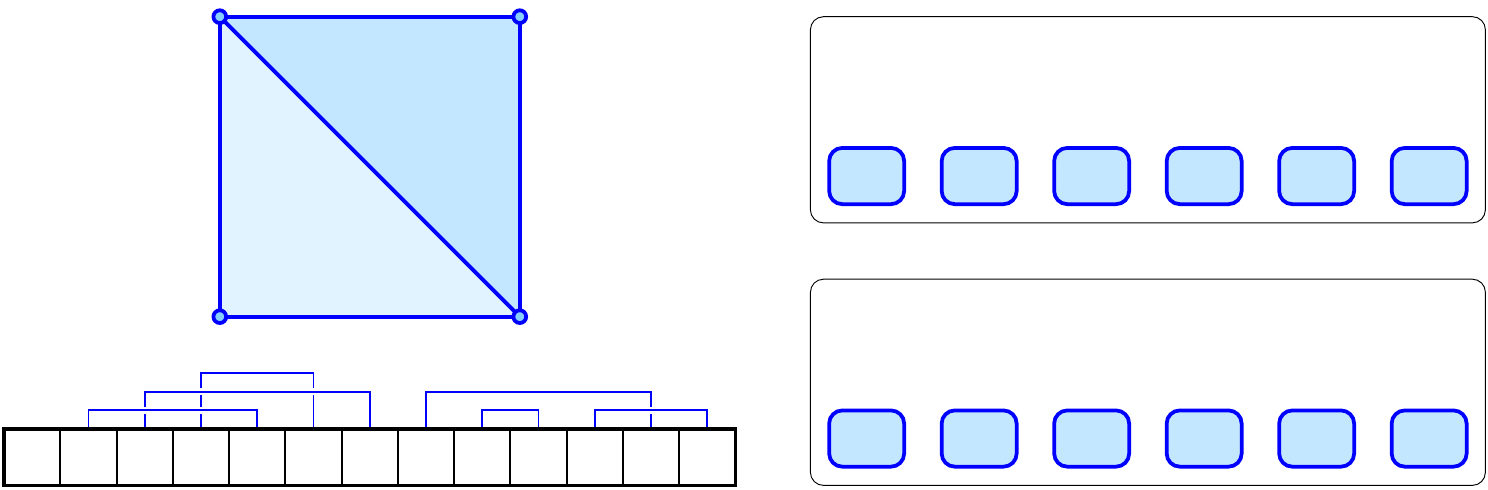_t}}
  \vspace{-0.05in}
  \caption{{\footnotesize The only difference to the example in Figure~\ref{fig:Example} is the order of the edges $\tt c$ and $\tt d$.
  To the \emph{right}, we see the death and birth relations, which are both empty.}}
  \label{fig:Example-2}
\end{figure}
It should however be noted that a transposition may change the depth poset without affecting the birth-death pairs.
To see this, we organize a complete case analysis along the classification of transpositions into three types: of two birth-giving cells, of two death-giving cells, and of a birth-giving with a death-giving cell.

\smallskip
Prior to the case analysis, we state two lemmas that will both be repeatedly used.
The first excludes certain configurations of birth-death pairs.
\begin{lemma}
  \label{lem:no_zig}
  Let $f \colon K \to \Rspace$ be a filter on a Lefschetz complex, and $a \prec y$ two cells in $K$.
  If $a$ is the last facet of $y$ in the ordering implied by $f$, and $a$ is not yet paired when we encounter $y$ in the ordering from the left, then $(a,y)$ is a birth-death pair of $f$.
\end{lemma}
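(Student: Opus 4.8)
The plan is to argue directly from the standard left-to-right column reduction of the boundary matrix $\Delta$, whose rows and columns are indexed by the cells of $K$ in the order implied by $f$. Recall that this reduction produces for each column $c$ a reduced column $R[c]$, that $c$ is \emph{birth-giving} precisely when $R[c] = 0$ and \emph{death-giving} precisely when $R[c] \neq 0$, and that in the latter case the pivot $\mathrm{low}(R[c])$ names the birth-giving cell with which $c$ is paired. The birth-death pairs of $f$ are exactly these pivot pairs, so it suffices to show that $y$ is death-giving with $\mathrm{low}(R[y]) = a$.

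First I would unpack the two hypotheses. Because $f$ is a filter and $a$ is a facet of $y$, we have $f(a) < f(y)$, so $a$ is processed before $y$; and because $a$ is the \emph{last} facet of $y$, it is the facet with the largest $f$-value, whence the lowest nonzero entry of the unreduced column satisfies $\mathrm{low}(\Delta[y]) = a$. Next I would translate the phrase ``$a$ is not yet paired when we encounter $y$'' into a statement about pivots. Since $a$ has already been processed and is still unpaired, $a$ cannot be death-giving, for a death-giving cell is paired the instant it is processed, with its pivot lying to its left; hence $a$ is birth-giving. Moreover, no processed column $b$ with $f(b) < f(y)$ can have $\mathrm{low}(R[b]) = a$, since otherwise $a$ would already have been claimed as the birth partner of such a $b$.

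The key step is the invariant that left-to-right reduction only moves pivots upward. When we process column $y$, every column preceding it is already in reduced form, and reducing $y$ repeatedly adds a reduced column $b$ with $f(b) < f(y)$ whose pivot equals the current pivot of $y$. As the current pivot starts at $a$ and no such $b$ has pivot $a$, the reduction finds nothing to cancel and halts immediately; the pivot of $y$ therefore remains at $a$. Consequently $R[y] \neq 0$ with $\mathrm{low}(R[y]) = a$, so $y$ is death-giving and paired with the birth-giving cell $a$, which exhibits $(a, y)$ as a birth-death pair of $f$.

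I expect the only delicate point to be the faithful translation of the combinatorial phrase ``$a$ is not yet paired when we encounter $y$'' into the matrix condition that no earlier reduced column has pivot $a$, together with the simultaneous conclusion that $a$ must be birth-giving. Everything else rests on the standard fact that left-to-right reduction never lowers a pivot, and since this is purely linear algebra over $\Zspace / 2\Zspace$, the passage from simplicial to Lefschetz complexes introduces no additional difficulty.
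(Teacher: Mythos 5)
Your proposal is correct and follows essentially the same route as the paper's proof: run the standard left-to-right column reduction, observe that $a$ being the last facet of $y$ puts the lowest nonzero entry of column $y$ in row $a$, and use the fact that $a$ is still unpaired to conclude no earlier column has a pivot in row $a$, so that entry survives as the pivot and $(a,y)$ is the resulting birth-death pair. Your extra care in translating ``not yet paired'' into the pivot condition (and ruling out $a$ being death-giving) only makes explicit what the paper leaves implicit.
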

\begin{proof}
  Consider the standard matrix reduction algorithm for persistent homology, which reduces the columns of the ordered boundary matrix from left to right.
  Since $a$ is the last facet of $y$, the lowest non-zero entry in column $y$ is in row $a$.
  When the algorithm arrives at $y$, $a$ is not yet paired, so there is no pivot in its row.
  It follows that this entry remains and becomes a pivot, which is equivalent to $(a,y)$ becoming a birth-death pair.
\end{proof}

The second lemma addresses a property of ordered boundary matrices that amounts to a reformulation of Lemma~4.6 in \cite{ELMS25}.
We state it without proof.
\begin{lemma}
  \label{lem:independent_of_order}
  Let $f \colon K \to \Rspace$ be a filter on a Lefschetz complex, $\Delta$ the corresponding ordered boundary matrix, $x$ a row of $\Delta$, and $y$ a column of $\Delta$.
  Then any two boundary matrices obtained by canceling all birth-death pairs with pivots below row $x$ or to the left of column $y$ following shallow but possibly different cancellation orders are the same.
\end{lemma}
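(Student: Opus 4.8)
The plan is to recognize each shallow cancellation as a single Gaussian pivot on the ordered boundary matrix and then to invoke the order-independence of iterated Schur complements. Write $\Delta$ for the current boundary matrix over $\Zspace/2\Zspace$, indexed by the surviving cells in filter order, so that $\Delta[c,d]=1$ iff $c$ is a facet of $d$. If $\sigma=(s,t)$ is shallow when we cancel it, then $s$ is still the last facet of $t$, so the pivot entry $\Delta[s,t]=1$. By the cancellation rule, we toggle $\Delta[c,d]$ for every facet $c$ of $t$ and every cofacet $d$ of $s$, i.e.\ we perform the update $\Delta[c,d]\gets \Delta[c,d]+\Delta[c,t]\,\Delta[s,d]$, and then delete rows $s,t$ and columns $s,t$. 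Since $\Delta[s,t]=1$, this is exactly $\Delta[c,d]\gets\Delta[c,d]-\Delta[c,t]\,\Delta[s,t]^{-1}\,\Delta[s,d]$, the Schur step eliminating the pivot at position $(s,t)$ (which removes row $s$ and column $t$), followed by the extra deletion of row $t$ and column $s$.

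First I would observe that the two extra deletions are harmless for commuting steps. Because $\Delta$ is graded---nonzero entries occur only between consecutive dimensions---the Schur update at $(s,t)$ only touches entries $(c,d)$ with $\dime{c}=\dime{s}$ and $\dime{d}=\dime{t}$, whereas the entries of row $t$ lie in columns one dimension higher and those of column $s$ in rows one dimension lower. Hence deleting row $t$ and column $s$ neither changes nor is changed by any Schur update, and these deletions commute with every other step. Moreover, since each cell is either birth- or death-giving and lies in at most one pair, the birth cells $\{s_i\}$ and death cells $\{t_i\}$ of the pairs in the prescribed set $S$ are pairwise distinct, so the pivots occupy distinct rows and distinct columns.

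This reduces the statement to the classical fact that eliminating a fixed set of pivot positions $\{(s_i,t_i)\}$ by successive Schur steps yields the same matrix in any order, since it computes the Schur complement with respect to the block indexed by rows $\{s_i\}$ and columns $\{t_i\}$. To apply it I must check two things. First, every shallow cancellation order cancels exactly the pairs of $S$: the birth-death pairs, and hence their pivot positions, are preserved by shallow cancellation, so membership in $S$ is order-independent, and the hypothesis stipulates that exactly the pairs of $S$ are canceled. Second, each pivot equals $1$ at the step where it is used, which is precisely what shallowness guarantees, as noted above; thus every shallow order is a legal elimination order for the same block. Consequently each shallow order computes the full Schur complement of that block, and order-independence forces the two resulting matrices (restricted to the surviving rows and columns, which by grading are never modified) to coincide.

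The main obstacle is making the second point rigorous: I must ensure that a shallow order never stalls, i.e.\ that the block $\Delta[\{s_i\},\{t_j\}]$ can be fully eliminated. The existence of at least one completing shallow order (there is always a shallow pair until the face relation is empty) already certifies that this block is invertible, so any other order with nonvanishing used pivots computes the same complement; the delicate part is verifying that the prescribed shape of $S$---all pairs whose pivot lies below row $x$ or left of column $y$---keeps the set of still-cancelable shallow pairs inside $S$ until $S$ is exhausted, which is the invariant one must formalize. An alternative, more combinatorial route avoids the global block statement: prove local confluence by showing that two simultaneously shallow cancellations in $S$ commute---the same grading computation shows their Schur updates and deletions commute and that each pair stays shallow after the other is canceled---and then combine local confluence with termination, which is automatic because every cancellation removes two cells, via Newman's lemma to obtain a unique normal form.
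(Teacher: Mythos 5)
Your proposal is essentially correct, and it is worth noting at the outset that the paper itself offers \emph{no} proof to compare against: the authors state this lemma as ``a reformulation of Lemma~4.6 in \cite{ELMS25}'' and omit the argument. So your Schur-complement proof is a genuinely independent, self-contained substitute. The core chain is sound: a shallow cancellation of $(s,t)$ is exactly the mod-$2$ rank-one update $\Delta[c,d] \gets \Delta[c,d] + \Delta[c,t]\,\Delta[s,d]$ with pivot $\Delta[s,t]=1$ guaranteed by shallowness; the pivot positions are fixed in advance because the birth-death pairs are preserved under shallow cancellation (second bullet property in Section~2) and occupy distinct rows and columns since birth- and death-giving cells are disjoint; and the surviving block of the result is the Schur complement of $\Delta$ with respect to the block on rows $\{s_i\}$ and columns $\{t_i\}$, which is order-independent by the standard quotient property, with invertibility of that block certified by the existence of any one completing shallow order. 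Your closing worry about the ``delicate part''---that shallow pairs might escape the prescribed set $S$---is unnecessary for the statement as given: the lemma compares two \emph{given} sequences, each assumed to cancel exactly the pairs with pivots below row $x$ or left of column $y$ and each assumed shallow at its time, so no invariant about which pairs \emph{could} be canceled needs to be established, and the Newman's-lemma fallback can be dropped entirely.

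One step does need repair. You claim that deleting row $t$ and column $s$ ``neither changes nor is changed by any Schur update,'' justified by grading. The first half is right, but the second half is false as stated: row $t$ has entries in columns of dimension $\dime{t}+1$, and a later pivot $(s',t')$ with $\dime{s'} = \dime{t}$ \emph{does} modify row $t$ if $\Delta[t,t']=1$ and row $t$ has not yet been deleted; so the deletions do not literally commute with every update. The fix is immediate from a fact you already established: since $t_i \neq s_j$ and $s_i \neq t_j$ for all $i,j$ (death cells are never birth cells), the rows $\{t_i\}$ and columns $\{s_i\}$ are never pivot rows or pivot columns, hence their entries are never \emph{read} by any Schur step---they can only be written to. Entries that are only written and eventually deleted cannot influence any surviving entry, so the timing of these deletions is irrelevant to the final matrix, which is all the argument requires. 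With that sentence corrected, the proof goes through.
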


\subsection{Case I: Birth-birth Transpositions}
\label{sec:3.1}

Let $(x,y)$ and $(a,b)$ be two nested or otherwise overlapping birth-death pairs with matching dimension, so $f(a) < f(x) < f(y) < f(b)$ or $f(x) < f(a) < f(y) < f(b)$ and $\dime{a} = \dime{x} = \dime{y}-1 = \dime{b}-1$.
As illustrated in Figure~\ref{fig:CaseBB}, we assume that $a$ and $x$ are consecutive in the ordering, so either the two pairs are incomparable in the depth poset, or they are connected by an arc but not by a path of two arcs.
To prepare the analysis, we cancel the birth-death pairs whose pivots are below row $x$ or to the left of column $y$ in the ordered boundary matrix.
By Lemma~\ref{lem:independent_of_order}, the sequence in which these pairs are canceled does not affect the remaining portion of the boundary matrix.
After the cancellations, $(x,y)$ is shallow and $(a,b)$ is either shallow or has $(x,y)$ as its sole predecessor in the depth poset.
Similarly, we cancel all birth-death pairs with pivots below row $a$ or to the left of column $y$ in the boundary matrix after transposing $a$ and $x$.
The two operations commute, so we can alternatively transpose $a$ and $x$ after the mentioned cancellations.
Refer to Figure \ref{fig:CaseBB} for the three sub-cases as defined after the initial cancellations:
\begin{figure}[h!]
  \centering \vspace{0.05in}
  \resizebox{!}{5.2in}{\input{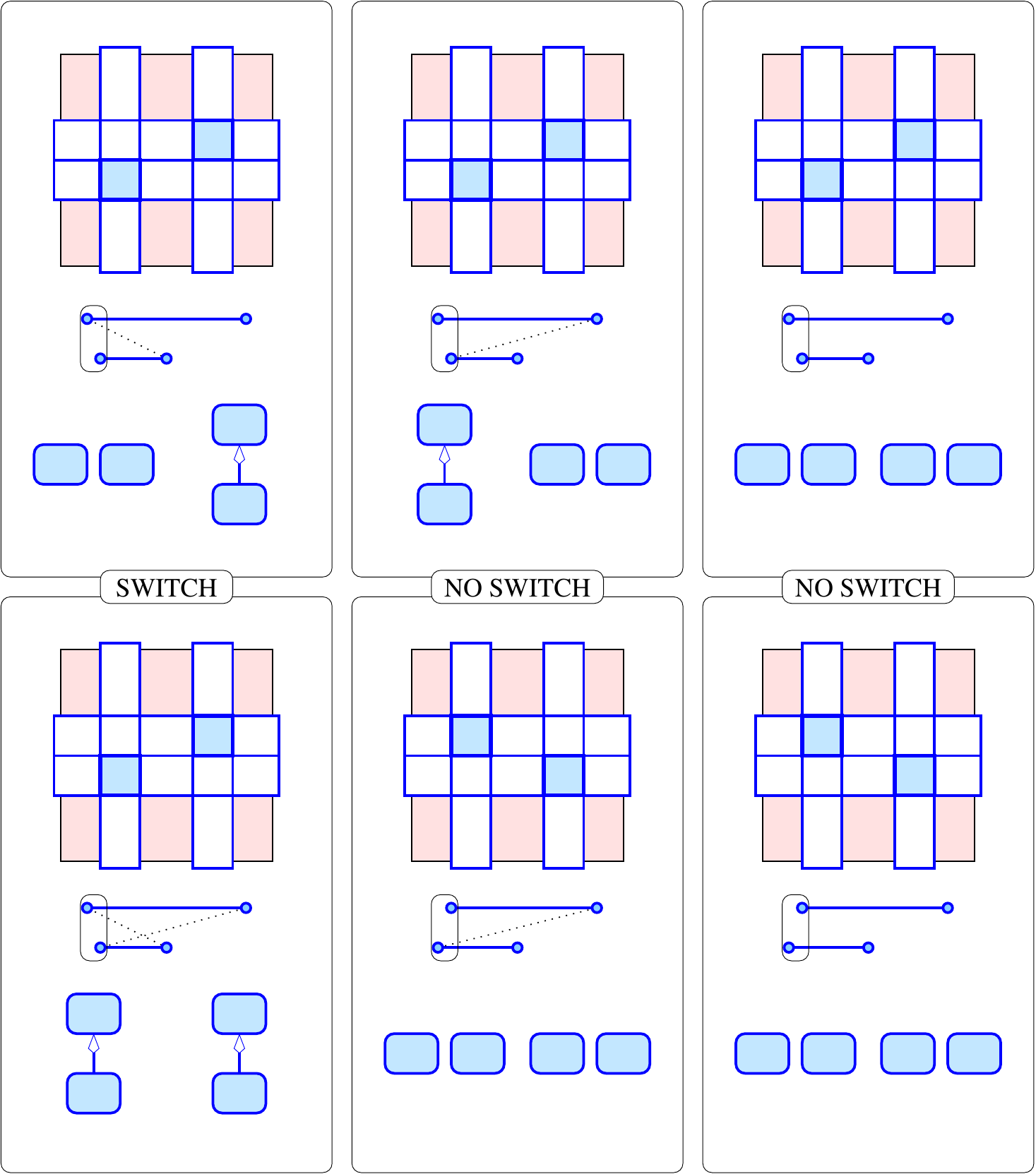_t}}
  \vspace{-0.05in}
  \caption{{\footnotesize The configurations before and after the transposition of two birth-giving cells.
  \emph{From left to right:} Cases~I.1, I.2, I.3, with the configuration before and after the transposition at the \emph{top} and the \emph{bottom}, respectively, or vice versa for the inverse transposition.
  Within each panel, we show the relevant birth- and death-giving cells as rows and columns of the boundary matrix, respectively, the two birth-death pairs as bars with \emph{dotted} face relation, and the corresponding nodes and arcs in the death and birth relations.}}
  \label{fig:CaseBB}
\end{figure}
\begin{description}
  \item[Case I.1:] $a \prec y$ and $x \nprec b$, as illustrated in the upper left panel of Figure~\ref{fig:CaseBB}.
  Since $(a,b)$ is not shallow, it must be that $(x,y)$ precedes $(a,b)$ in the depth poset before the transposition of $a$ and $x$.
  To see that the transposition is a switch note that after transposing, $a$ is the last facet of $y$ and is not yet paired before we encounter $y$ from the left, so $(a,y)$ is a birth-death pair by Lemma~\ref{lem:no_zig}.
  By the stability of the persistence diagram \cite{CEH07}, $(x,b)$ is another pair after the transposition, and the remaining pairs are unchanged; see the lower left panel of Figure~\ref{fig:CaseBB}.
  Finally, reading the two panels from bottom to top, we have the inverse switch defined by $x \prec y$ and $a \prec b$.
  \item[Case I.2:] $a \nprec y$ and $x \prec b$, as illustrated in the upper middle panel of Figure~\ref{fig:CaseBB}.
  Like in Case~I.1, $(x,y)$ is a predecessor of $(a,b)$ before the transposition of $a$ and $x$.
  Since $a$ is not a facet of $y$, $(x,y)$ remains shallow, so the transposition is not a switch.
  But since the two pairs are no longer nested, they are also no longer comparable by Lemma~\ref{lem:properties_of_depth_poset}; see the lower middle panel of Figure~\ref{fig:CaseBB}.
  Reading the two panels from bottom to top, we have the inverse transposition defined by $a \nprec y$ and $x \prec b$, but now for two overlapping pairs.
  \item[Case I.3:] $a \nprec y$ and $x \nprec b$, as illustrated in the upper right panel of Figure~\ref{fig:CaseBB}.
  Like in Case~I.2, the transposition of $a$ and $x$ maintains $(x,y)$ as a shallow pair, so the transposition is not a switch.
  The two pairs are incomparable before the transposition because $x$ is not a facet of $b$ and $y$ is not a cofacet of $a$, and after the transposition because the pairs are no longer nested; see the lower right panel of Figure~\ref{fig:CaseBB}.
  The inverse transposition is defined by $a \nprec y$ and $x \nprec b$, but now for pairs that are neither nested nor disjoint.
\end{description}
When the transposition of $a$ and $x$ is not a switch, then the changes described in Cases~I.2 and I.3 are exhaustive, but if it is a switch, there may be changes beyond those described in Case~I.1.
We need notation to describe them.
Letting $\varphi = (x,y)$ and $\psi = (a,b)$ be two birth-death pairs, the former is a predecessor of the latter in the death relation if $U_1[y,b] = 1$ and in the birth relation if $U_2[a,x] = 1$.
We therefore write
\begin{align}
  \Pred{1}{}{a,b} &= \{ \varphi \mid (\varphi, \psi) \in \DR \} = \{ (x,y) \in \BD{}{f} \mid y \neq b {\rm ~and~} U_1[y,b] = 1 \}; 
    \label{eqn:pred1} \\
  \Pred{2}{}{a,b} &= \{ \varphi \mid (\varphi, \psi) \in \BR \} =\{ (x,y) \in \BD{}{f} \mid a \neq x {\rm ~and~} U_2[a,x] = 1 \}; 
    \label{eqn:pred2} \\
  \Succ{1}{}{a,b} &= \{ \varphi \mid (\psi, \varphi) \in \DR \} =\{ (x,y) \in \BD{}{f} \mid b \neq y {\rm ~and~} U_1[b,y] = 1 \}; 
    \label{eqn:succ1} \\
  \Succ{2}{}{a,b} &= \{ \varphi \mid (\psi, \varphi) \in \BR \} =\{ (x,y) \in \BD{}{f} \mid x \neq a {\rm ~and~}U_2[x,a] = 1 \},
    \label{eqn:succ2}
\end{align}
and $\Pred{1}{{\rm bef}}{a,b}$, $\Pred{1}{{\rm aft}}{a,b}$ for the sets before and after a transposition, etc.
We begin with the changes of the successors in the death relation and write $\oplus$ for the symmetric difference operation between sets.
\begin{lemma}
  \label{lem:changes_after_BB-type_switch}
  Let $f \colon K \to \Rspace$ be a filter on a Lefschetz complex, $(x,y)$ and $(a,b)$ birth-death pairs of matching dimension, $a$ precedes $x$, $y$ precedes $b$, and $a, x$ are consecutive in the ordering implied by the filter.
  Then $a \prec y$ iff $(x,y) \in \Pred{2}{}{a,b}$ iff the transposition of $x$ and $a$ is a BB-type switch (see the left panels in Figure~\ref{fig:CaseBB}), and in this case
{\small
  \begin{align}
	\Succ{1}{{\rm aft}}{a,y} &= \Succ{1}{{\rm bef}}{x,y} \oplus (x,b) \oplus \Succ{1}{{\rm bef}}{a,b}; & 
		\Succ{1}{{\rm aft}}{x,b} &= \Succ{1}{{\rm bef}}{a,b}; 
	\label{eqn:Lemma33a} \\
	\Succ{2}{{\rm aft}}{a,y} &= \Succ{2}{{\rm bef}}{x,y} \oplus \{ (x,b), (a,b) \}; &
		\Succ{2}{{\rm aft}}{x,b} &= \Succ{2}{{\rm bef}}{a,b}; 
	\label{eqn:Lemma33b} \\
	\Pred{1}{{\rm aft}}{a,y} &= \Pred{1}{{\rm bef}}{x,y};&
		\Pred{1}{{\rm aft}}{x,b} &= \Pred{1}{{\rm bef}}{a,b}; 
	\label{eqn:Lemma33c} \\
	\Pred{2}{{\rm aft}}{a,y} &= \Pred{2}{{\rm bef}}{x,y}; &
		\Pred{2}{{\rm aft}}{x,b} &= \Pred{2}{{\rm bef}}{a,b}.
      \label{eqn:Lemma33d}
  \end{align}
}%
\normalsize
  Otherwise, the depth relation remains as is, except if $x \prec b$ (see the middle panels in Figure~\ref{fig:CaseBB}), in which case
  \begin{align}
    \Succ{1}{{\rm aft}}{x,y} &= \Succ{1}{{\rm bef}}{x,y} \oplus (a,b) \oplus \Succ{1}{{\rm bef}}{a,b},
      \label{eqn:Lemma33e}
  \end{align}
  while all other sets of successors and predecessors of $(x,y)$ and $(a,b)$ remain unchanged.
\end{lemma}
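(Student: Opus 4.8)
The plan is to reduce the entire statement to an explicit trace of Algorithms~\ref{alg:column_reduction} and \ref{alg:row_reduction} on a local normal form of the boundary matrix. By Lemma~\ref{lem:independent_of_order}, cancelling all birth-death pairs whose pivots lie below row $x$ or to the left of column $y$ leaves the remaining portion of $\Delta$ intact and independent of the cancellation order; the same holds after transposing $a$ and $x$, and, as noted above, the two cancellation batches commute. In this normal form the only surviving birth rows are $a$ and $x$ and the only death columns are $y$ and $b$ together with the later columns that will serve as successors: the pivot of column $y$ is $x$, the pivot of column $b$ is $a$, $\Delta[a,y]=1$ iff $a \prec y$, and $\Delta[x,b]=1$ iff $x \prec b$, and because the region to the left of $y$ and below $x$ is frozen, no operation touches these entries before the corresponding pivot is processed. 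I would first settle the triple equivalence on this normal form. That $a\prec y$ characterises the BB-type switch is exactly the argument in Case~I.1: if $a\prec y$, then after the transposition $a$ is the last facet of $y$ and is still unpaired when $y$ is reached, so $(a,y)$ is a pair by Lemma~\ref{lem:no_zig} and the pairing changes, whereas if $a\nprec y$ the swap does not alter the last facet of $y$, so $(x,y)$ stays shallow. For the matrix side, Algorithm~\ref{alg:row_reduction} processes column $y$ with no prior row operation affecting it, so the entry cleared at row $a$ equals $\Delta[a,y]$; hence $U_2[a,x]=1$ iff $a\prec y$, which by \eqref{eqn:pred2} means $(x,y)\in\Pred{2}{}{a,b}$.

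For the switch case I would recompute $U_1$ and $U_2$ directly. The predecessor identities \eqref{eqn:Lemma33c} and \eqref{eqn:Lemma33d} are the easy half: a predecessor of a pair in either relation is recorded by an operation coming from a column to the left of, or a row below, the frozen region, and such operations see only the death cell ($y$ or $b$), which the swap preserves; the identity of the birth partner is irrelevant, so these sets transfer verbatim. The successor identities are the substance. Tracing Algorithm~\ref{alg:column_reduction} after the transposition, the pivot of column $y$ becomes $a$, and when column $y$ (which now carries a $1$ in both rows $a$ and $x$) is added to later columns to clear row $a$, the $1$'s it deposits in row $x$ retarget the clearing that column $b$ subsequently performs; comparing this with the pre-transposition trace, in which row $x$ is processed first, yields the symmetric differences in \eqref{eqn:Lemma33a}, and the analogous bookkeeping in Algorithm~\ref{alg:row_reduction} gives \eqref{eqn:Lemma33b}. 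The right-hand equalities in \eqref{eqn:Lemma33a}--\eqref{eqn:Lemma33d} reflect that the interaction of column $b$ (respectively row $a$) with the still-later cells is governed by data to the right of $b$, which the local swap does not reach.

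The non-switch case is lighter. When $a\nprec y$ the pairing is preserved, and if in addition $x\nprec b$ then neither $U_1$ nor $U_2$ changes, since no entry at rows $a,x$ of the surviving columns is disturbed. If $a\nprec y$ but $x\prec b$, then before the transposition $(x,y)$ must be added to column $b$ to expose its pivot $a$, so $(a,b)\in\Succ{1}{{\rm bef}}{x,y}$; after the swap $a$ lies below $x$, column $b$ reduces to pivot $a$ without borrowing column $y$, and the column-$b$ processing---carried out first and still carrying row $x$---toggles row $x$ in exactly the members of $\Succ{1}{{\rm bef}}{a,b}$ before column $y$ is reduced. Collecting these toggles gives precisely \eqref{eqn:Lemma33e}, while every other successor and predecessor set of $(x,y)$ and $(a,b)$ is untouched.

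The main obstacle is the successor bookkeeping in the switch case, and specifically the coincidence $a\prec y$ together with $x\prec b$, where $(x,y)$ is a predecessor of $(a,b)$ in both relations at once. There the pivot of the post-transposition column $b$ is no longer exposed by a single clearing step, so one must verify that the cascade of toggles along the two swapped rows still collapses to the stated symmetric differences rather than producing spurious arcs. I expect to discharge this by comparing the two reductions operation by operation on the frozen normal form, using that everything to the left of $y$ and below $x$ is common to both runs.
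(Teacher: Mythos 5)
Your overall route is the paper's route: reduce to the frozen normal form via Lemma~\ref{lem:independent_of_order}, settle the triple equivalence through the case analysis together with Lemma~\ref{lem:no_zig}, and obtain the successor/predecessor identities by comparing the reductions before and after the swap. Your treatment of the predecessor identities \eqref{eqn:Lemma33c}--\eqref{eqn:Lemma33d} and of the non-switch case, including the derivation of \eqref{eqn:Lemma33e}, is sound and matches the paper's computation. But there is one genuine gap, and it sits exactly where you flag your ``main obstacle'': the sub-case $a \prec y$ together with $x \prec b$ is not the hardest case --- it is vacuous. In the normal form the pivots of columns $y$ and $b$ are rows $x$ and $a$; if both off-pivot entries $R_1[a,y]$ and $R_1[x,b]$ were $1$, then the reduction step that clears $R_1[x,b]$ by adding column $y$ to column $b$ would simultaneously turn $R_1[a,b]$ into $1+1=0$, destroying the pivot of $(a,b)$ and contradicting the hypothesis that $(a,b)$ is a birth-death pair. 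This exclusion is precisely what makes the three-case analysis of Figure~\ref{fig:CaseBB} complete and the lemma's trichotomy well posed; your plan to ``discharge'' that case by an operation-by-operation comparison would have you verifying identities under contradictory assumptions.

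What your trace comparison actually needs, and what the paper supplies, is the positive identity hiding behind this exclusion: in the switch case, after canceling $(x,y)$ the pair $(a,b)$ is shallow, so rows $a$ and $x$ agree on every column strictly between $y$ and $b$ (the paper's $M_a = M_x$, obtained from $M_a + M_x = 0$). Without it, your before- and after-traces of Algorithm~\ref{alg:column_reduction} could a priori differ on death columns in the open interval $(y,b)$ --- and such columns genuinely exist, because pairs whose birth rows lie above $a$ survive your cancellation batch (your claim that ``the only surviving birth rows are $a$ and $x$'' is false and masks exactly this danger). With $M_a = M_x$ in hand, the rows of $U_1$ at $y$ before and after the swap, namely $M_x \mid N_x$ and $M_a \mid N_a$, differ only at column $b$ and beyond, which is what collapses the comparison to $\Succ{1}{{\rm aft}}{a,y} = \Succ{1}{{\rm bef}}{x,y} \oplus (x,b) \oplus \Succ{1}{{\rm bef}}{a,b}$; likewise the row of $U_1$ at $b$ equals the tail of $N_a + N_x$ both before and after, being symmetric in $a$ and $x$, which proves $\Succ{1}{{\rm aft}}{x,b} = \Succ{1}{{\rm bef}}{a,b}$. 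Your stated reason for the right-hand equalities --- that the interaction with later cells ``is governed by data to the right of $b$, which the local swap does not reach'' --- is not by itself a proof, since the swap does change which of the two rows is processed first and hence which row's pattern is recorded; it is the symmetry of $N_a + N_x$ that saves the statement.
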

\begin{proof}
  The two equivalences follow from the completeness of the case analysis displayed in Figure~\ref{fig:CaseBB}.
  To see \eqref{eqn:Lemma33a}, consider rows $a$ and $x$ in the boundary matrix before and after the switch.
  With reference to the upper left panel of Figure~\ref{fig:CaseBB}---which shows the matrix before the switch---we give names to the portions of these rows delimited by columns $y$ and $b$:
  \begin{align}
    M_a &= \{R_1[a,t] \mid f(y) \leq f(t) < f(b)\}; ~~ N_a = \{R_1[a,t] \mid f(b) \leq f(t)\};  \\
    M_x &= \{R_1[x,t] \mid f(y) \leq f(t) < f(b)\}; ~~ N_x = \{R_1[x,t] \mid f(b) \leq f(t)\}.
  \end{align}
  Canceling $(x,y)$ changes row $a$ to $(M_a + M_x) | (N_a + N_x)$, in which the bar means catenation.
  At this time, $(a,b)$ is a shallow pair, so $M_a + M_x = 0$, which implies $M_a = M_x$.
  Columns $y$ and $b$ in $U_1$ are transposed copies of rows $x$ and $a$ at the moment before $(x,y)$ and then $(a,b)$ are canceled.
  Hence, column $y$ in $U_1$ is $(M_x | N_x)^T$, and column $b$ is $(N_a+N_x)^T$.
  This is before the transposition of $a$ and $x$.
  After the transposition, column $y$ in $U_1$ is $(M_a | N_a)^T$, and column $b$ is $(N_x + N_a)^T$.
  Using $M_a = M_x$ and $N_a = N_x + (N_a + N_x)$, we rewrite these relations in different notation:
  \begin{align}
    U_1^{{\rm aft}} [y,t] &= \left\{ \begin{array}{ll}
        U_1^{{\rm bef}} [y,t]  &  {\rm for~~} f(y) \leq f(t) < f(b) , \\
        U_1^{{\rm bef}} [y,t] + U_1^{{\rm bef}} [b,t] &  {\rm for~~} f(b) \leq f(t) ; 
      \end{array} \right. 
      \label{eqn:U1fory} \\
    U_1^{{\rm aft}} [b,t] &= U_1^{{\rm bef}} [b,t] .
      \label{eqn:U1forb}
  \end{align}
  Finally, we translate the relations into the notation of the lemma using \eqref{eqn:succ1}, and get \eqref{eqn:Lemma33a} from \eqref{eqn:U1fory} and \eqref{eqn:U1forb}.
  We omit the arguments for \eqref{eqn:Lemma33b}, \eqref{eqn:Lemma33c}, \eqref{eqn:Lemma33d}, and \eqref{eqn:Lemma33e}.
\end{proof}
The symmetric cases---when $x$ precedes $a$ so the birth-birth transpositions go from the lower to the upper panes in Figure~\ref{fig:CaseBB}---are similar and the same relations between the successors and predecessors before and after the transposition apply.

\subsection{Case II: Death-death Transpositions}
\label{sec:3.2}

The setting is similar to Case~I: $(x,y)$ and $(a,b)$ are nested or otherwise overlapping birth-death pairs of matching dimension, but now we assume that $y$ and $b$ are consecutive in the ordering implied by the filter.
As before, we prepare the analysis by canceling all birth-death pairs whose pivots are below row $x$ or to the left of column $y$.
After these cancellations, $(x,y)$ is shallow, and $(a,b)$ is shallow or it has $(x,y)$ as its sole predecessor in the depth poset.
Refer to Figure~\ref{fig:CaseDD} for the three sub-cases as defined after the initial cancellations:
\begin{figure}[h!]
  \centering \vspace{0.05in}
  \resizebox{!}{5.2in}{\input{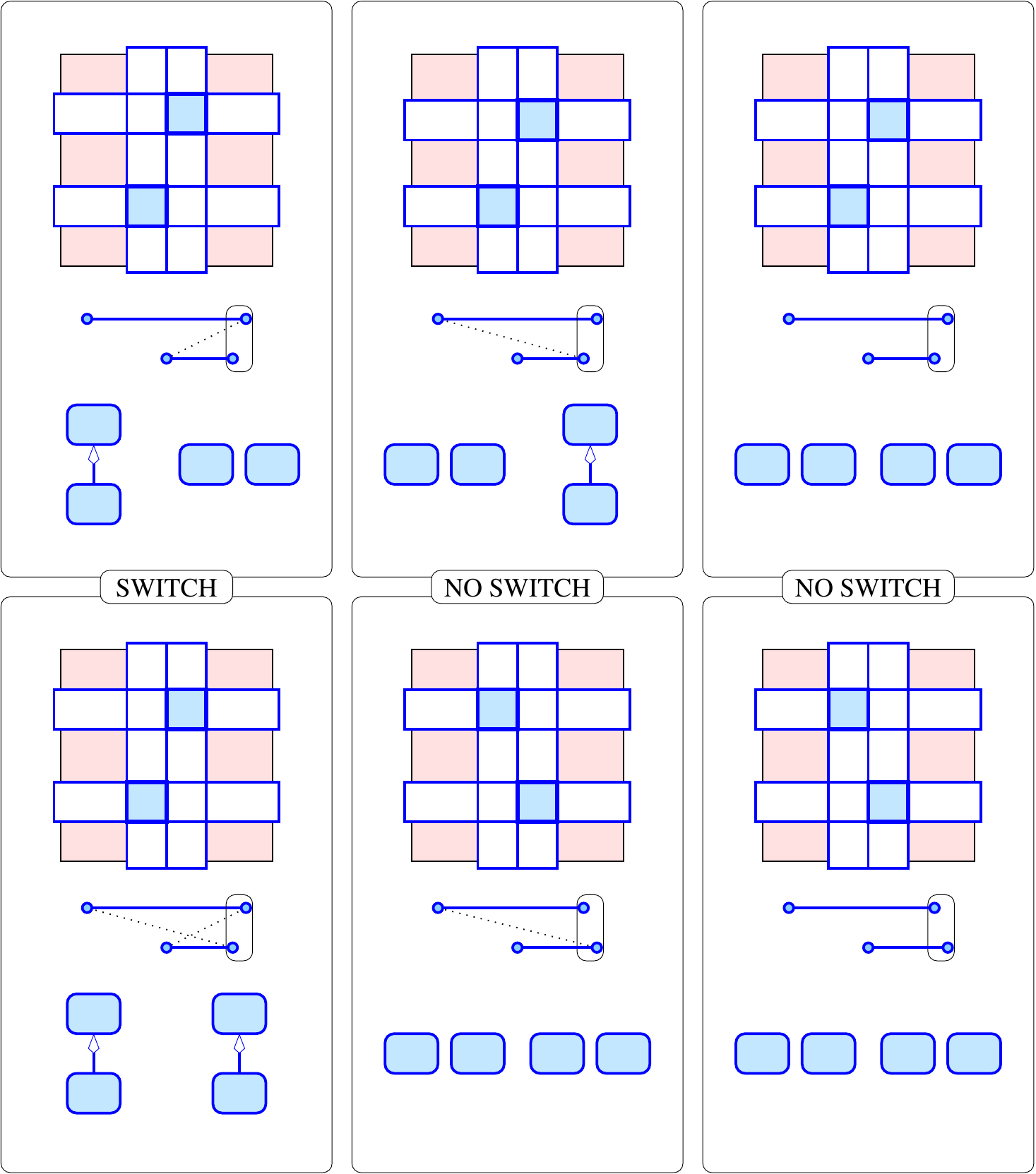_t}}
  \vspace{-0.05in}
  \caption{{\footnotesize The configurations before and after the transposition of two death-giving cells in the filter.
  \emph{From left to right:} Cases~II.1, II.2, II.3, with the configuration before the transposition at the \emph{top} and after the transposition at the \emph{bottom}, or vice versa for the inverse transposition.
  Within each panel, we show the relevant birth-giving cells as rows and death-giving cells as columns of the matrix, the two birth-death pairs as bars, and the corresponding nodes and relations in the death and birth relations.}}
  \label{fig:CaseDD}
\end{figure}
\begin{description}
  \item[Case II.1:] $a \nprec y$ and $x \prec b$ for a nested pair, as illustrated in the upper left panel of Figure~\ref{fig:CaseDD}.
  The transposition is a switch, and reading the two left panels from bottom to top, we have the inverse switch defined by $x \prec y$ and $a \prec b$.
  \item[Case II.2:] $a \prec y$ and $x \nprec b$ for a nested pair, as illustrated in the upper middle panel of Figure~\ref{fig:CaseDD}.
  The transposition is not a switch, and reading the two middle panels from bottom to top, we have the inverse transposition defined by the same two conditions for overlapping pairs.
  \item[Case II.3:] $a \nprec y$ and $x \nprec b$ for a nested pair, as illustrated in the upper right panel of Figure~\ref{fig:CaseDD}.
  Again this transposition is not a switch, and reading the two right panels from bottom to top, we have the inverse transposition defined by the same two conditions for overlapping pairs.
\end{description}
The sub-cases are strictly symmetric to Cases~I.1, I.2, I.3, with identical arguments for the symmetric actions.
While omitting other details, we state the result symmetric to Lemma~\ref{lem:changes_after_BB-type_switch}, which describes the changes to the death and birth relations beyond the pairs that contain the transposed cells.
\begin{lemma}
  \label{lem:changes_after_DD-type_switch}
  Let $f \colon K \to \Rspace$ be a filter on a Lefschetz complex, $(x,y)$, $(a,b)$ birth-death pairs of matching dimension, $a$ precedes $x$, $y$ precedes $b$, and $y, b$ are consecutive in the ordering implied by the filter.
  Then $x \prec b$ iff $(x,y) \in \Pred{1}{}{a,b}$ iff the transposition of $y$ and $b$ is a DD-type switch (see the left panels in Figure~\ref{fig:CaseDD}). and in this case
\small
\begin{align}
	\Succ{1}{{\rm aft}}{x,b} &= \Succ{1}{{\rm bef}}{x,y} \oplus \{ ((a,y), (a,b) \};&	
		\Succ{1}{{\rm aft}}{a,y} &= \Succ{1}{{\rm bef}}{a,b}; 
	\label{eqn:Lemma34a} \\
	\Succ{2}{{\rm aft}}{x,b} &= \Succ{2}{{\rm bef}}{x,y} \oplus (a,y) \oplus \Succ{2}{{\rm bef}}{a,b};& 
		\Succ{2}{{\rm aft}}{a,y} &= \Succ{2}{{\rm bef}}{a,b};
	\label{eqn:Lemma34b} \\
	\Pred{1}{{\rm aft}}{x,b} &= \Pred{1}{{\rm bef}}{x,y};& 
		\Pred{1}{{\rm aft}}{a,y} &= \Pred{1}{{\rm bef}}{a,b}; 
	\label{eqn:Lemma34c} \\
	\Pred{2}{{\rm aft}}{x,b} &= \Pred{2}{{\rm bef}}{x,y};&
		\Pred{2}{{\rm aft}}{a,y} &= \Pred{2}{{\rm bef}}{a,b}. 
	\label{eqn:Lemma34d} 
\end{align}
\normalsize
  Otherwise, the depth relation remains as is, except if $a \prec y$ (see the middle panels in Figure~\ref{fig:CaseDD}), in which case
  \begin{align}
    \Succ{2}{{\rm aft}}{x,y} &= \Succ{2}{{\rm bef}}{x,y} \oplus (a,b) \oplus \Succ{2}{{\rm bef}}{a,b},
      \label{eqn:Lemma34e}
  \end{align}
  while all other sets of sets of successors and predecessors of $(s,y)$ and $(a,b)$ remain unchanged.
\end{lemma}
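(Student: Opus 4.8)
The plan is to obtain Lemma~\ref{lem:changes_after_DD-type_switch} as the dual of Lemma~\ref{lem:changes_after_BB-type_switch} under the primal--dual symmetry of the two reduction algorithms. Algorithm~1 reduces columns and stores the death relation in $U_1$ via $\Delta = R_1 U_1$, while Algorithm~2 reduces rows and stores the birth relation in $U_2$ via $\Delta = U_2 R_2$; the two are exchanged by transposing the ordered boundary matrix and reversing the row and column orders, which swaps birth-giving rows with death-giving columns. Under this exchange a death-death transposition of the consecutive columns $y$ and $b$ becomes a birth-birth transposition of two consecutive rows, $U_1 \leftrightarrow U_2$, and $\Succ{1}{}{\cdot} \leftrightarrow \Succ{2}{}{\cdot}$. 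I would therefore replay the proof of Lemma~\ref{lem:changes_after_BB-type_switch} on the dual, reading every statement about $U_1$ as the corresponding statement about $U_2$.

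First I would verify the three equivalences. After canceling every pair with pivot below row $x$ or left of column $y$---legitimate by Lemma~\ref{lem:independent_of_order}---the pair $(x,y)$ is shallow and $(a,b)$ is either shallow or has $(x,y)$ as its only predecessor, which is the configuration prepared before Cases~II.1--II.3. Since $y$ and $b$ are consecutive, whether $x$ is a facet of $b$ is exactly what decides whether swapping $y$ and $b$ alters the pairing; passing to the dual matrix turns this into the hypothesis of Lemma~\ref{lem:no_zig}, whose conclusion, combined with the stability of the diagram \cite{CEH07}, yields the crossed pairs $(x,b)$ and $(a,y)$ in the switch case. Matching the outcome against the three panels of Figure~\ref{fig:CaseDD} gives $x \prec b$ iff $U_1[y,b] = 1$ iff $(x,y) \in \Pred{1}{}{a,b}$ iff the transposition is a DD-type switch.

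The substantive computation is \eqref{eqn:Lemma34b}, the dual of \eqref{eqn:Lemma33a}. Mirroring the BB derivation, I would fix columns $y$ and $b$ of the boundary matrix, split each into the blocks delimited by the rows $a$ and $x$, and track how canceling $(x,y)$ modifies column $b$. Shallowness of $(a,b)$ at that moment forces the block of column $b$ between rows $a$ and $x$ to vanish after the addition---the column analogue of the identity $M_a = M_x$ used for the rows in the BB proof. Rows $a$ and $x$ of $U_2$ are the transposed copies of these columns at the moments just before $(x,y)$ and then $(a,b)$ are canceled, so reading them off before and after swapping $y$ and $b$ gives the column-transposed versions of \eqref{eqn:U1fory} and \eqref{eqn:U1forb}; translating through \eqref{eqn:succ2} yields \eqref{eqn:Lemma34b}. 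The companion bookkeeping of $U_1$ gives the death-relation formula \eqref{eqn:Lemma34a}, the predecessor identities \eqref{eqn:Lemma34c} and \eqref{eqn:Lemma34d} hold because swapping the two columns $y, b$ leaves the rows feeding the predecessor sets untouched, and \eqref{eqn:Lemma34e} is the dual of \eqref{eqn:Lemma33e}, arising in sub-case~II.2 where $(x,y)$ stays shallow while $(a,b)$ loses its nesting.

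The step demanding the most care is checking that the dictionary really does interchange $U_1$ and $U_2$ cleanly, so that the symmetric differences land on the correct pairs: here the three-term difference carrying the extra pair $(a,y)$ must sit in line \eqref{eqn:Lemma34b} (the successors in the birth relation) and the two-element difference in line \eqref{eqn:Lemma34a} (the successors in the death relation), exactly opposite to the BB case, where the extra pair $(x,b)$ sits in line \eqref{eqn:Lemma33a}. Once this correspondence is pinned down and checked against one small explicit example, the remaining manipulations are the same block additions as in the proof of Lemma~\ref{lem:changes_after_BB-type_switch}.
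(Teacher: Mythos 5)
Your proposal is correct and takes essentially the same route as the paper: the paper gives no separate proof of Lemma~\ref{lem:changes_after_DD-type_switch}, deriving it as ``strictly symmetric'' to Lemma~\ref{lem:changes_after_BB-type_switch}, and the anti-transpose duality you spell out (transpose the ordered boundary matrix, reverse both orders, swap $U_1 \leftrightarrow U_2$ and hence $\Succ{1}{}{\cdot} \leftrightarrow \Succ{2}{}{\cdot}$, $\Pred{1}{}{\cdot} \leftrightarrow \Pred{2}{}{\cdot}$) is exactly the intended symmetry, made explicit. Your dictionary also places the symmetric differences correctly---the three-term difference carrying $(a,y)$ in \eqref{eqn:Lemma34b} and the two-element difference in \eqref{eqn:Lemma34a}, mirroring \eqref{eqn:Lemma33a} and \eqref{eqn:Lemma33b}---so nothing further is needed.
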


\subsection{Case III: Birth-death Transpositions}
\label{sec:3.3}

Case~III is characterized by the transposition of cells $b$ and $x$, in which $(a,b)$ and $(x,y)$ are disjoint or overlapping birth-death pairs with consecutive dimensions, so $f(a) < f(b) < f(x) < f(y)$ or $f(a) < f(x) < f(b) < f(y)$ and $\dime{a} + 1 = \dime{b} = \dime{x} = \dime{y} -1$.
Similar to Cases~I and II, we prepare the analysis by canceling all pairs whose pivots lie below the rows or to the left of columns $b$ and $x$.
This includes all predecessors of $(a,b)$ and $(x,y)$, so after these cancellations, both are shallow.
Refer to Figure~\ref{fig:CaseBD} for the two sub-cases, with the conditions referring to the complex after the initial cancellations:
\begin{figure}[h!]
  \centering \vspace{0.05in}
  \resizebox{!}{4.6in}{\input{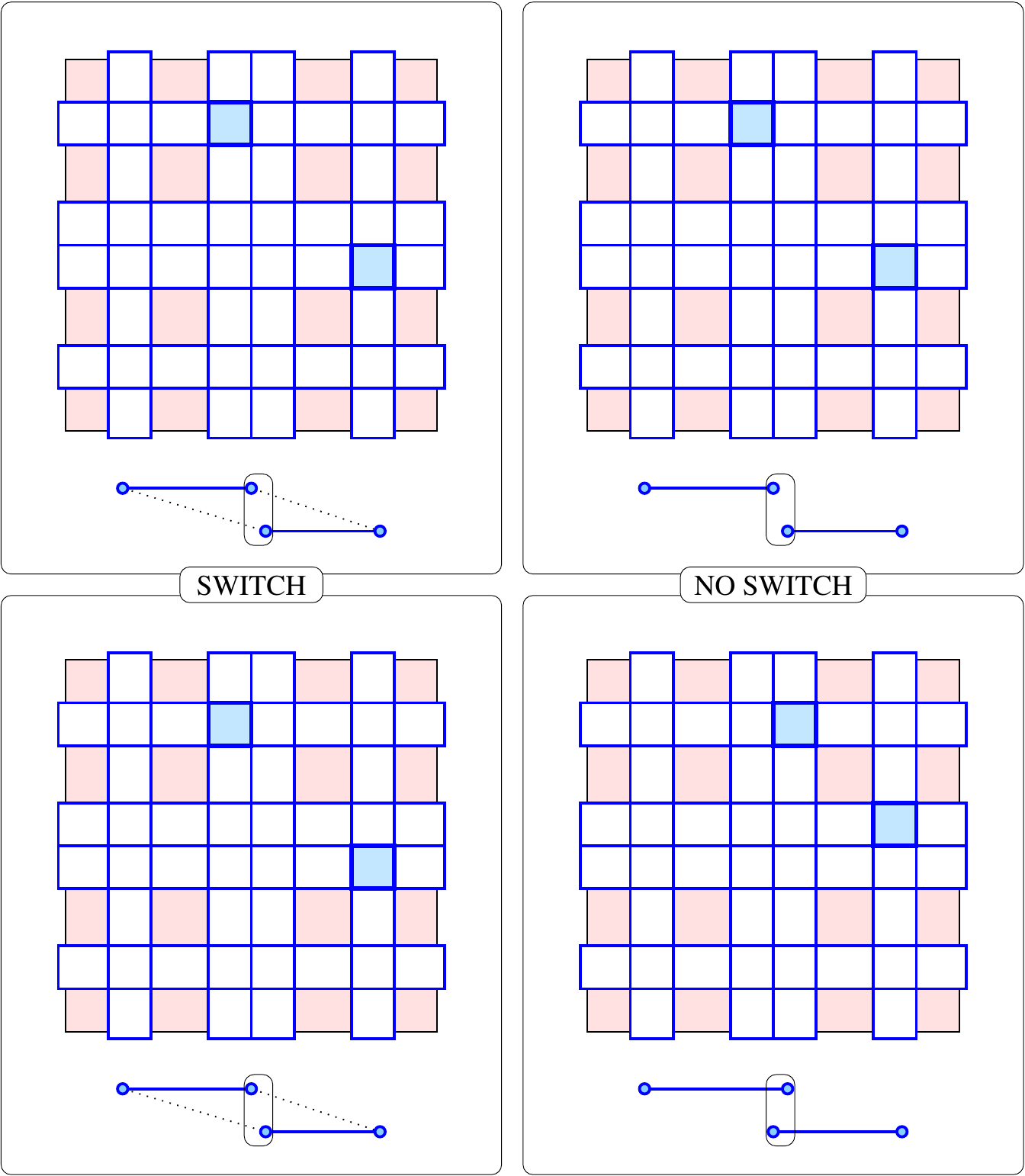_t}}
  \vspace{-0.05in}
  \caption{{\footnotesize The configurations in Cases~III.1 and III.2 on the \emph{left} and \emph{right}, before and after the transposition of a birth-giving cell with a death-giving cell.
  Within each panel, we show the relevant rows and columns of the matrix, and the corresponding birth-death pairs as bars.}}
  \label{fig:CaseBD}
\end{figure}
\begin{description}
  \item[Case III.1:] $a \prec x$ and $b \prec y$ for nested pairs, as illustrated in the upper left panel of Figure~\ref{fig:CaseBD}.
    After transposing $b$ and $x$, $(a,x)$ and $(b,y)$ are both shallow, so the transposition is a switch and the two new pairs are birth-death pairs of the new filter.
    By the first two properties in Lemma~\ref{lem:properties_of_depth_poset}, there is no arc connecting the two pairs in the depth poset.
    The inverse transposition is symmetric and follows the same rules.
  \item[Case III.2:] $a \nprec x$ and $b \nprec y$ for nested pairs, as illustrated in the upper right panel of Figure~\ref{fig:CaseBD}.
    After transposing $b$ and $x$, $(a,b)$ and $(x,y)$ are still shallow, so the transposition is not a switch.
    The inverse transposition is defined by the same conditions but now for two overlapping pairs, as illustrated in the bottom right panel of Figure~\ref{fig:CaseBD}.
\end{description}
There are no additional cases, because the relevant birth-death pairs are shallow both before and after the transposition, which excludes configurations different from the ones illustrated in Figure~\ref{fig:CaseBD}.
Observe that Case~III.1 is quite different from all others, because the transposition changes the types of two cells: $b$ from death- to birth-giving, and $x$ from birth- to death-giving.
Hence, their corresponding rows and columns in the boundary matrix assume different roles, which may have a profound impact on the depth poset.
We prove that this is in fact not the case for the successors, but it can be for the predecessors.

\smallskip
Recall that Algorithm~\ref{alg:column_reduction} records the column reductions for all cells alike, but only the ones for death-giving cells turn into arcs of the death relation.
Symmetrically, Algorithm~\ref{alg:row_reduction} records the row reductions for all cells, but only the ones for birth-giving cells that also belong to birth-death pairs turn into arcs of the birth relation.
We will now make use of the so far neglected information.
Letting $a$ and $x$ be the birth-giving cells of the two pairs in a BD-type switch, we collect the reductions of column $x$ prior to reaching row $a$ as arcs in the death relation, and letting $y$ and $b$ be death-giving cells of these pairs, we collect the reductions of row $b$ prior to reaching column $y$ as arcs in the birth relation.
\begin{lemma}
  \label{lem:changes_after_BD-type_switch}
  Let $f \colon K \to \Rspace$ be a filter on a Lefschetz complex, $(a,b)$ and $(x,y)$ birth-death pairs of consecutive dimensions, and $b,x$ consecutive in the ordering implied by the filter.
  Then $a \prec x$ and $b \prec y$ iff the transposition of $b$ and $x$ is a BD-type switch (see the left panels in Figure~\ref{fig:CaseBD}), and in this case
\small
\begin{align}
\Succ{1}{{\rm aft}}{a,x} &= \Succ{1}{{\rm bef}}{a,b};\qquad\qquad\qquad\qquad\qquad\quad \Succ{1}{{\rm aft}}{b,y} = \Succ{1}{{\rm bef}}{x,y} ;
\label{eqn:Lemma35a} \\
\Succ{2}{{\rm aft}}{a,x} &= \Succ{2}{{\rm bef}}{a,b};\qquad\qquad\qquad\qquad\qquad\quad \Succ{2}{{\rm aft}}{b,y} = \Succ{2}{{\rm bef}}{x,y} ;
\label{eqn:Lemma35b} \\
\Pred{1}{{\rm aft}}{a,x} &= \{(t,x) \mid U_1^{{\rm bef}}[t,x]=1, f(t) > f(a) \};\quad\ \Pred{1}{{\rm aft}}{b,y} = \Pred{1}{{\rm bef}}{x,y};
\label{eqn:Lemma35c} \\
\Pred{2}{{\rm aft}}{a,x} &= \Pred{2}{{\rm bef}}{a,b};\quad\ \Pred{2}{{\rm aft}}{b,y} = \{(b,s) \mid U_2^{{\rm bef}}[b,s]=1, f(s) < f(y) \}.
\label{eqn:Lemma35d}
\end{align}
\normalsize
  Otherwise, the depth relation remains as is (see the right panels in Figure~\ref{fig:CaseBD}).
\end{lemma}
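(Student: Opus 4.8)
The plan is to reduce everything to an explicit, small boundary-matrix computation and then read the death and birth relations off the recording matrices $U_1$ and $U_2$. First I would dispose of the equivalence. After the preparatory cancellations---legitimate by Lemma~\ref{lem:independent_of_order}---both $(a,b)$ and $(x,y)$ are shallow, so the active part of the ordered boundary matrix is exactly the configuration drawn in the left panels of Figure~\ref{fig:CaseBD}, with $\Delta[a,b]=\Delta[x,y]=1$ and with $\Delta[a,x]=\Delta[b,y]=1$ precisely when $a\prec x$ and $b\prec y$. In that situation, transposing the adjacent cells $b$ and $x$ makes $a$ the last facet of $x$ (still unpaired when $x$ is reached from the left) and $b$ the last facet of $y$, so Lemma~\ref{lem:no_zig} yields the new shallow pairs $(a,x)$ and $(b,y)$; conversely, if either facet relation is absent, the pivots in rows $a$ and $x$ are untouched and the pairing is preserved. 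This gives the stated iff and simultaneously records the one structural fact driving the rest: across the switch $b$ turns from death- to birth-giving and $x$ from birth- to death-giving.

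Next I would establish the successor identities \eqref{eqn:Lemma35a} and \eqref{eqn:Lemma35b}, together with the two ``unchanged'' predecessor halves, namely the right identity in \eqref{eqn:Lemma35c} and the left identity in \eqref{eqn:Lemma35d}. The $\DR$-successors of a pair are read from the death cell's row of $U_1$, that is, from the column operations clearing that cell's pivot row to the right; transposing $b$ and $x$ only permutes two adjacent columns and leaves row $a$ (outside columns $b,x$) and every later row intact, so the set of later death columns carrying a $1$ in the relevant pivot row is unchanged. The single extra column that enters the window after the switch is the type-changed cell $b$, now birth-giving and hence excluded from the pair-indexed successor set. The symmetric statement for $U_2$ handles the $\BR$-successors, and the same bookkeeping shows that the predecessor sets whose indexing cell keeps its type---$y$ for $(b,y)$ in $\DR$ and $a$ for $(a,x)$ in $\BR$---are merely relabelled.

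The substantive part is the pair of asymmetric predecessor identities---the left identity in \eqref{eqn:Lemma35c} and the right identity in \eqref{eqn:Lemma35d}---and this is where I expect the main obstacle. The mechanism is the ``neglected'' reduction: before the switch $x$ is birth-giving, so Algorithm~\ref{alg:column_reduction} drives column $x$ all the way to zero, and the death columns $t$ added to it are faithfully recorded as $U_1^{{\rm bef}}[t,x]=1$ but then discarded because $x$ is not a death cell. After the switch $x$ is death-giving with pivot in row $a$, so its column is reduced only down to that pivot---the very same column operations, truncated once the entry in row $a$ is reached. I would prove that the truncated reduction agrees entry-for-entry with the full before-switch reduction on the rows strictly below $a$: invoking Lemma~\ref{lem:independent_of_order} to fix a bottom-to-top order in which every pivot beneath row $a$ is cleared before row $a$ is processed, and checking that swapping the adjacent cells $b,x$ perturbs none of those sub-$a$ operations. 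Reading off the retained operations gives exactly $\{(t,x)\mid U_1^{{\rm bef}}[t,x]=1,\ f(t)>f(a)\}$, once one notes that a death column $t<x$ used to clear column $x$ below row $a$ necessarily satisfies $f(a)<f(t)$, so that the filtration threshold on $t$ matches the pivot-row cutoff. The symmetric argument for the left-to-right row reduction of the now-birth-giving cell $b$, truncated at column $y$, yields the right identity in \eqref{eqn:Lemma35d}.

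Finally, for the ``Otherwise'' case (III.2) the transposition is not a switch: by the iff at least one of $a\prec x$, $b\prec y$ fails, the corresponding off-diagonal entry of $\Delta$ is absent, and swapping the adjacent columns and rows $b,x$ creates no new pivot and alters none of the recorded clearings, so both $\DR$ and $\BR$---hence the whole depth relation---are unchanged. Throughout, reconciling the $f(t)>f(a)$ (respectively $f(s)<f(y)$) index condition with the pivot-row and pivot-column cutoff, and verifying that the adjacent swap leaves the sub-$a$ (respectively left-of-$y$) reduction untouched, are the only places that demand genuine care.
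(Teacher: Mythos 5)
Your overall route is the paper's: prepare with cancellations legitimized by Lemma~\ref{lem:independent_of_order}, detect the switch via shallowness and Lemma~\ref{lem:no_zig}, and recover the two asymmetric predecessor sets from the neglected reductions of column $x$ recorded in $U_1^{{\rm bef}}$ and of row $b$ recorded in $U_2^{{\rm bef}}$, truncated at the new pivot; on that last point your plan is in fact more detailed than the paper's one-sentence justification, and you correctly flag the reconciliation of the $f(t)>f(a)$ and $f(s)<f(y)$ cutoffs as the delicate spot (note, concretely, that $U_1^{{\rm bef}}[b,x]=1$ with $f(b)>f(a)$, so the indexing must be over pairs of the \emph{after} filter, where $b$ is birth-giving, for the formula in \eqref{eqn:Lemma35c} to exclude it).

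The genuine gap is in your argument for the successor identities \eqref{eqn:Lemma35a} and \eqref{eqn:Lemma35b}. You claim the transposition ``only permutes two adjacent columns and leaves row $a$ and every later row intact, so the set of later death columns carrying a $1$ in the relevant pivot row is unchanged.'' But for $\Succ{1}{{\rm aft}}{b,y} = \Succ{1}{{\rm bef}}{x,y}$ the relevant pivot row of column $y$ is $x$ before and $b$ after the switch---these are exactly the two transposed rows, which are not intact but exchanged. The columns $v > y$ that receive column $y$ are those with a $1$ in row $x$ before, respectively in row $b$ after, and these sets coincide only if rows $b$ and $x$ agree entrywise to the right of $y$; your reduction to the four highlighted entries of Figure~\ref{fig:CaseBD} establishes nothing about those entries. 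Symmetrically, $\Succ{2}{{\rm aft}}{a,x} = \Succ{2}{{\rm bef}}{a,b}$ requires columns $b$ and $x$ to agree above row $a$. The paper closes precisely this hole with a short pivot argument: after the preparatory cancellations $b$ gives death, so its row contains no pivot, and a leftmost column in which rows $b$ and $x$ differed would create a forbidden pivot; hence rows $b$ and $x$ are \emph{equal}, and by the symmetric argument columns $b$ and $x$ are equal, so the transposition is literally a renaming of two identical rows and two identical columns, from which all four successor identities and the two unchanged predecessor halves follow at once. This same equality is also what excludes the mixed configurations ($a \prec x$ with $b \nprec y$, or vice versa), a fact your ``precisely when'' dichotomy and the converse direction of the iff assert without proof. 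Inserting the equal-rows/equal-columns observation repairs the proposal; without it, the successor claims beg the question.
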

\begin{proof}
  To prove \eqref{eqn:Lemma35a} and \eqref{eqn:Lemma35b}, we first consider rows $b$ and $x$ in the boundary matrix (after canceling all birth-death pairs whose pivots are below row $x$ or to the left of column $b$).
  Before the transposition, $b$ gives death, so there is no pivot in its row.
  We claim that this implies that the two rows are equal.
  Indeed, if there is a column in which the entries in rows $b$ and $x$ are different, then the leftmost such entry would be a pivot, which is a contradiction to the assumptions.
  Since the two rows are the same, transposing them makes no difference, other than exchanging their names.
  This implies $\Succ{1}{{\rm aft}}{a,x} = \Succ{1}{{\rm bef}}{a,b}$ as well as $\Succ{1}{{\rm aft}}{b,y} = \Succ{1}{{\rm bef}}{x,y}$.
  The symmetric argument implies the corresponding relations for the successors in the birth relation.

  \smallskip
  To prove \eqref{eqn:Lemma35c} and \eqref{eqn:Lemma35d}, we recall that rows $b$ and $x$ are equal, so in the death relation the predecessors of $(x,y)$ before the transposition are the predecessors of $(b,y)$ after the transposition and, symmetrically, in the birth relation the predecessors of $(a,b)$ before the transposition are the predecessors of $(a,x)$ after the transposition.
  The remaining predecessors of $(a,x)$ and $(b,y)$ are determined while canceling the birth-death pairs with pivots below and to the left of the rows and columns of these pairs.
  We can get this information from column $x$ in $U_1$ and row $b$ in $U_2$ as computed before the transposition.
\end{proof}
The symmetric cases---in which we read the panels in Figure~\ref{fig:CaseBD} from bottom to top---are similar.
This completes the case analysis of changes to the depth poset caused by the transposition of two cells that are consecutive in the filter.

\section{Computational Experiments}
\label{sec:4}

In this section, we present measurements of the depth poset collected while running the algorithms on $1$-, $2$-, and $3$-dimensional random functions and straight-line homotopies between them.
For each set of parameters, the results are averaged over ten repeats.

\subsection{Random Piecewise Linear Functions}
\label{sec:4.1}

To minimize complications and avoid boundary effects as well as any essential homology, we construct the random functions on a regular cubical subdivision of the $d$-dimensional torus.
Choosing a positive integer, $n$, we use $(\Rspace / n \Zspace)^d$ as a model of the $d$-torus, subdivide it with the unit $d$-cubes of $(\Zspace / n \Zspace)^d + [0,1]^d$, and kill its essential homology classes by adding $2^d$ \emph{extra cells}, namely the empty set as the sole $(-1)$-cell and $\binom{d}{p}$ $(p+1)$-cells, each  with boundary equal to a subdivided $p$-torus, for $1 \leq p \leq d$.
Write $K = K(n,d)$ for the resulting complex, which consists of $\binom{d}{p} n^d$ $p$-cubes, for $0 \leq p \leq d$, plus the $2^d$ extra cells.

\begin{figure}[hbt]
  \centering
  \vspace{-0.05in}
  \hspace{-0.1in} \includegraphics[width=0.35\textwidth]{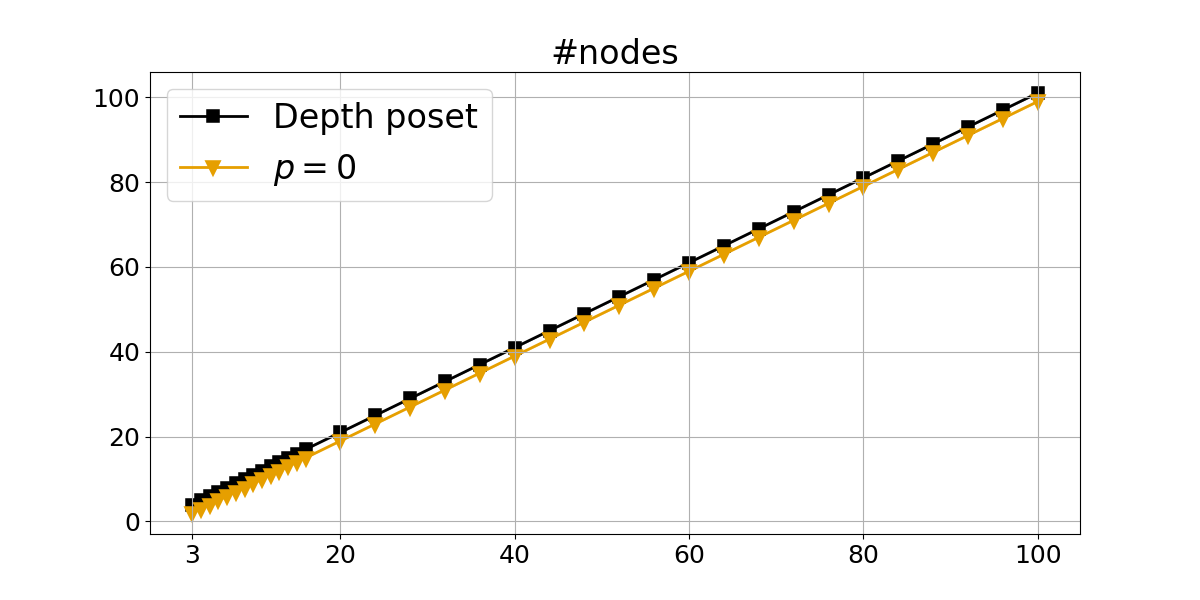}
  \hspace{-0.2in} \includegraphics[width=0.35\textwidth]{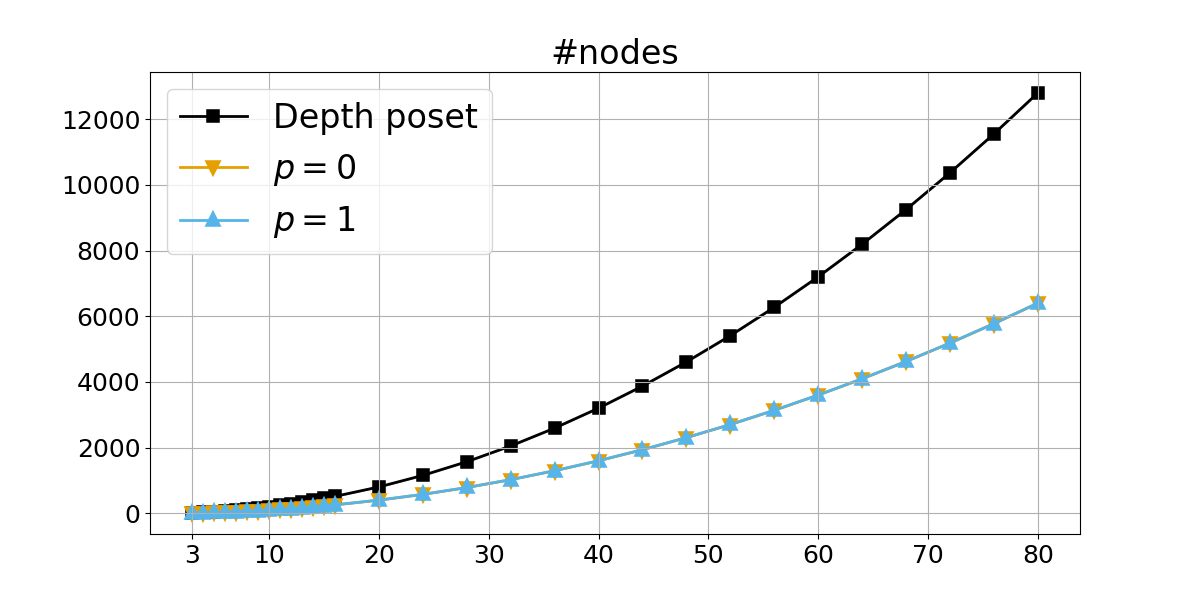}
  \hspace{-0.2in} \includegraphics[width=0.35\textwidth]{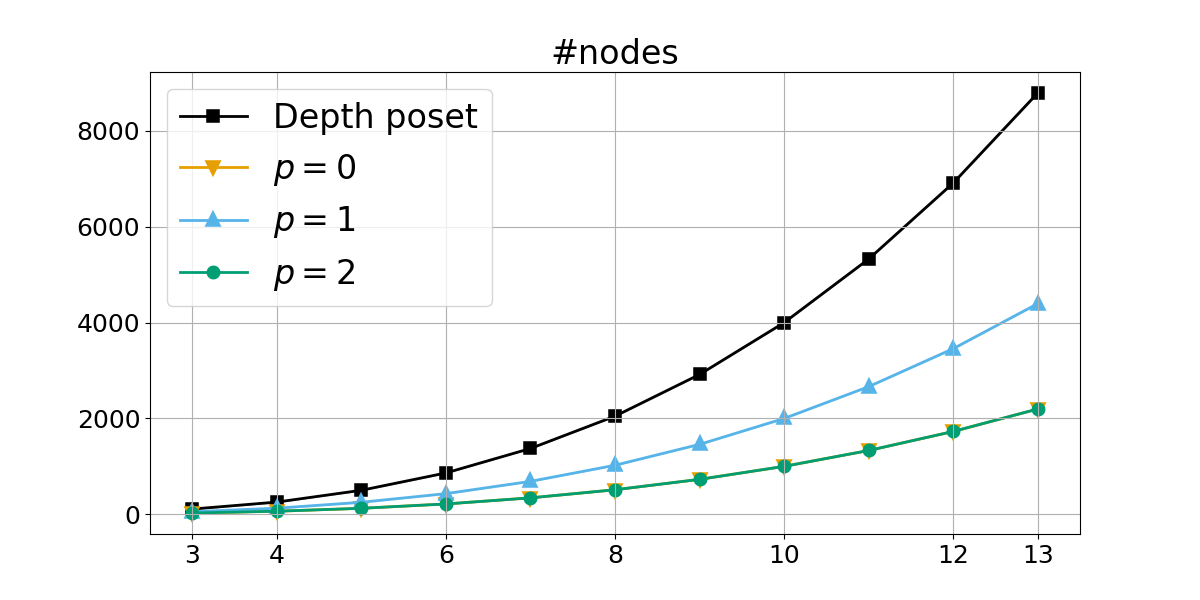} \\
  \vspace{-0.00in}
  \hspace{-0.1in} \includegraphics[width=0.35\textwidth]{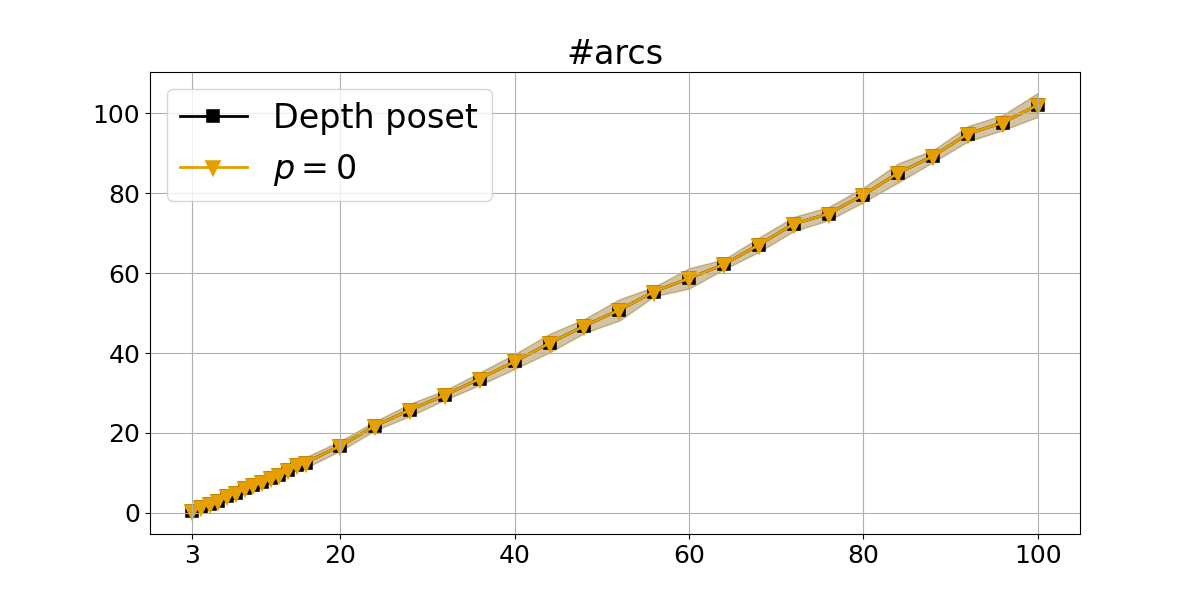}
  \hspace{-0.2in} \includegraphics[width=0.35\textwidth]{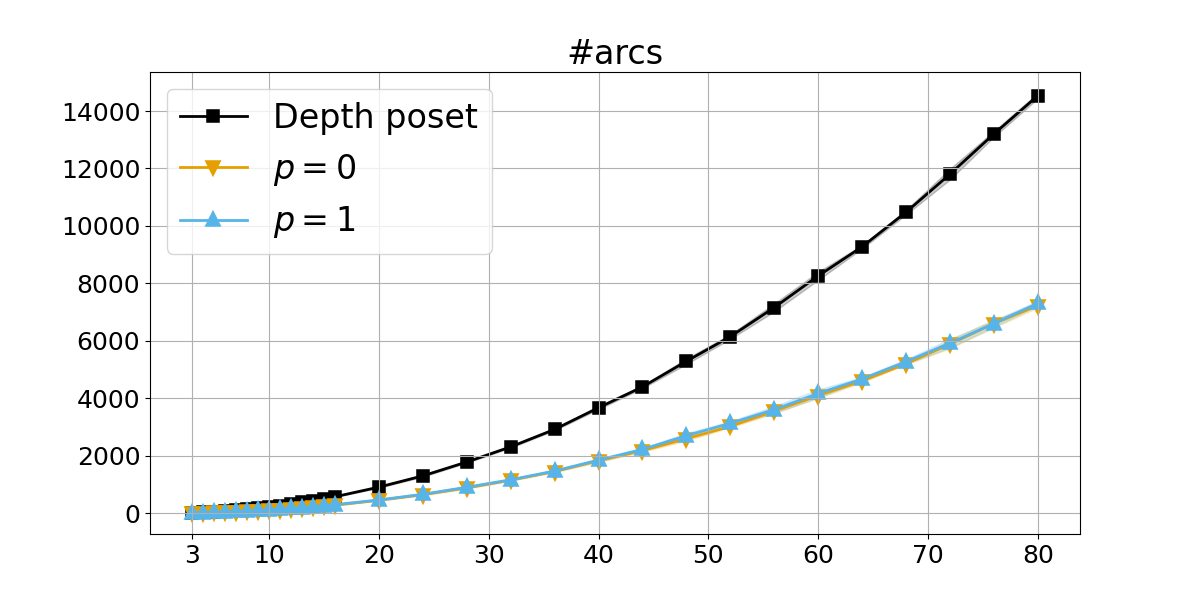}
  \hspace{-0.2in} \includegraphics[width=0.35\textwidth]{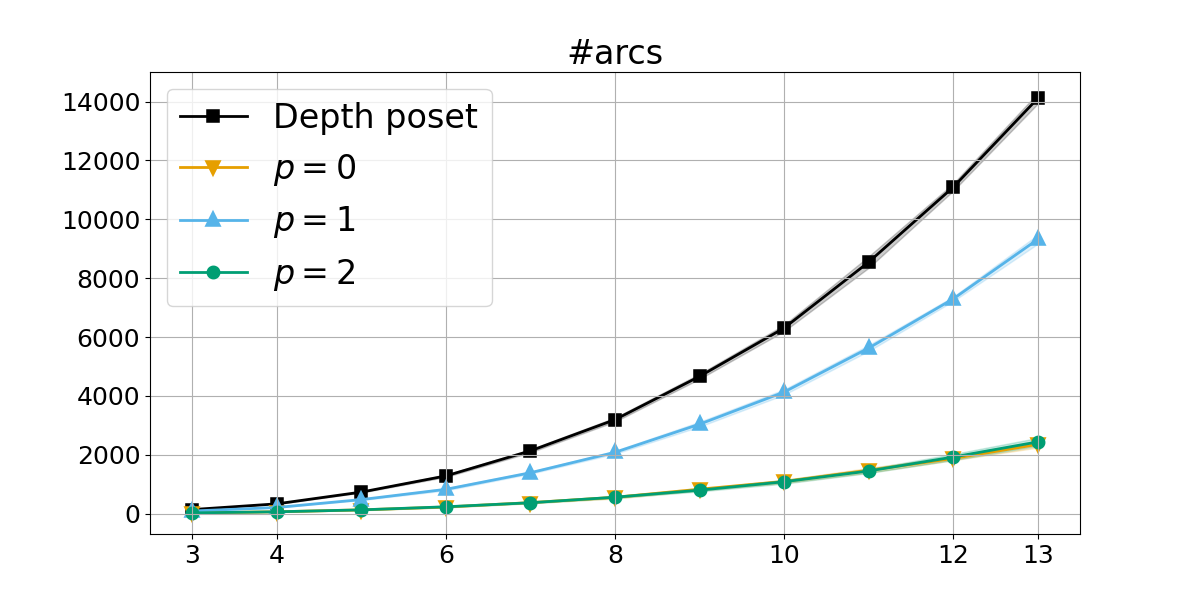} \\
  \vspace{-0.05in}
  \caption{\footnotesize \emph{From left to right:} the average number of nodes (\emph{upper row}) and arcs (\emph{lower row}) in the transitive reduction of the depth poset of a random function on the $d$-torus, for $d = 1, 2, 3$, respectively.
  Because of the symmetries in the construction, the curves for $p=0$ and $p=d-1$ are nearly identical.}
  \label{fig:nodes_arcs}
\end{figure}
To turn this complex into a random function, $f \colon K \to \Rspace$, we choose $f(\gamma)$ in $[p,p+1]$, whenever $p = \dime{\gamma}$.
In particular, we choose $f(\gamma)$ uniformly at random in $[p,p+1]$ whenever $\gamma$ is a $p$-cube, and we set $f(\gamma) = p+1$ whenever it is an extra $p$-cell.
In spite of the extra cells that kill the essential homology of the $d$-torus, we will refer to $f$ as a \emph{random function on the $d$-torus}.
\begin{figure}[hbt]
  \centering
  \vspace{-0.05in}
  \hspace{-0.1in} \includegraphics[width=0.35\textwidth]{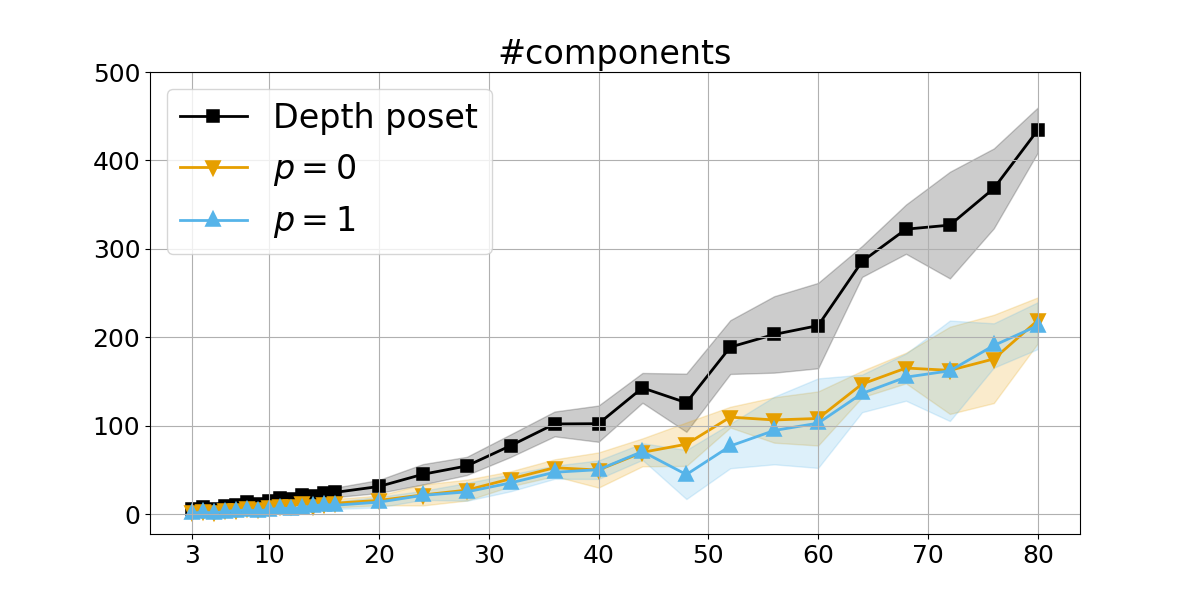}
  \hspace{-0.2in} \includegraphics[width=0.35\textwidth]{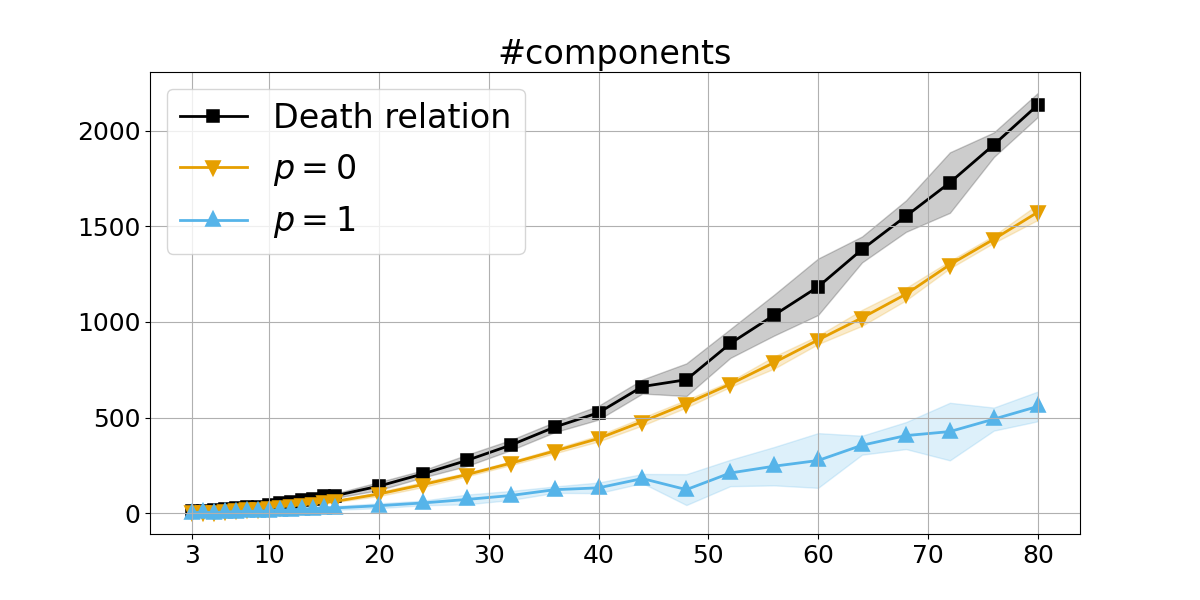}
  \hspace{-0.2in} \includegraphics[width=0.35\textwidth]{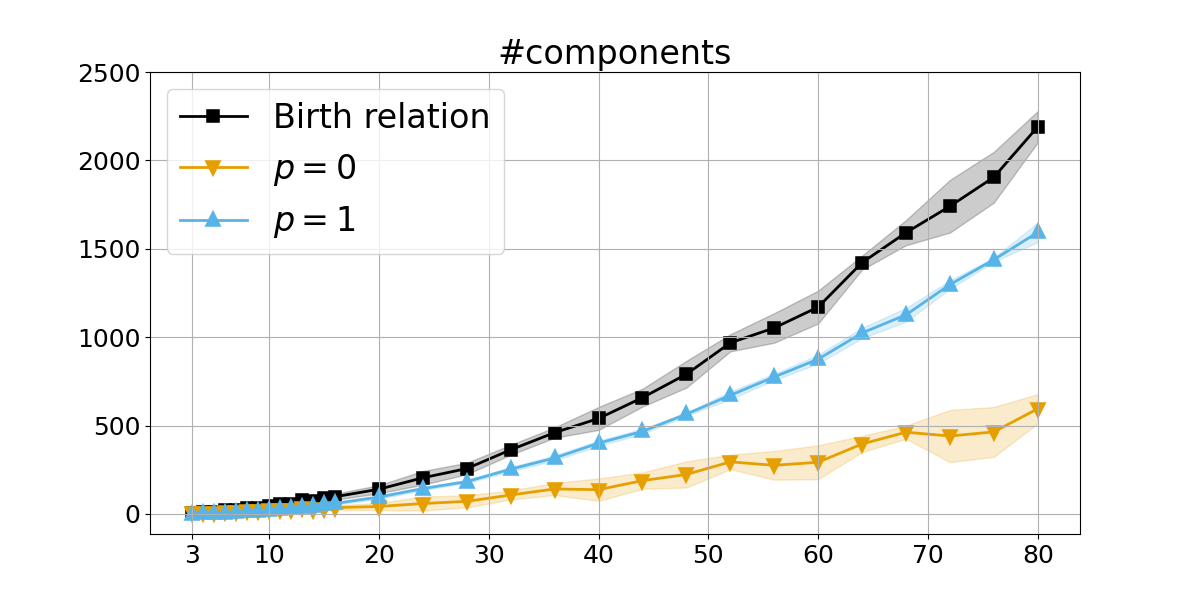} \\
  \vspace{-0.00in}
  \hspace{-0.1in} \includegraphics[width=0.35\textwidth]{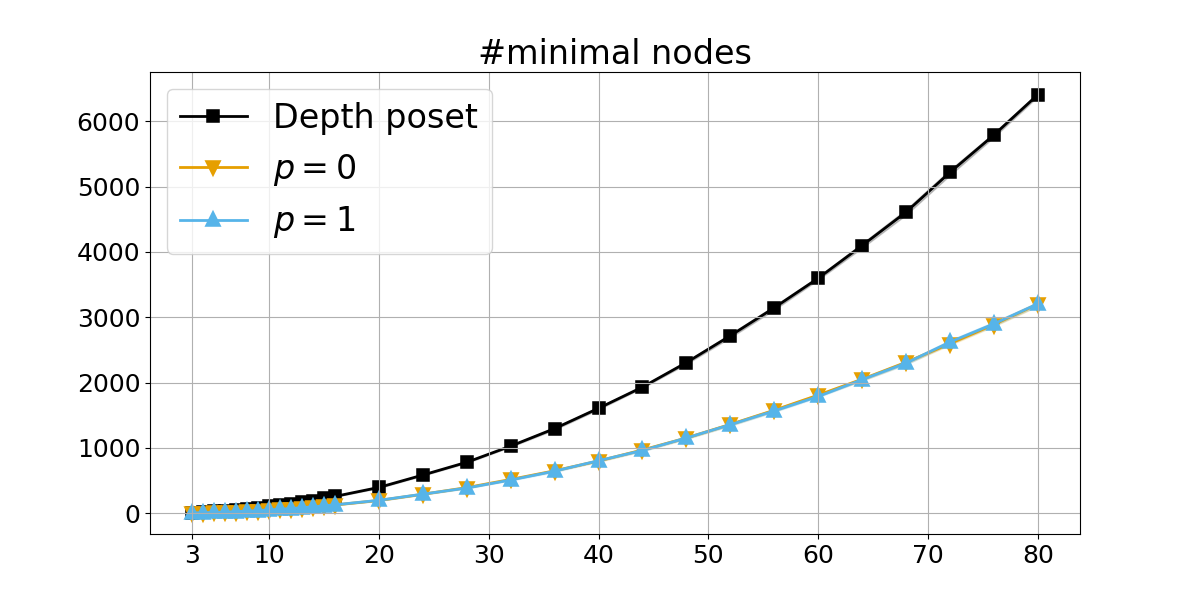}
  \hspace{-0.2in} \includegraphics[width=0.35\textwidth]{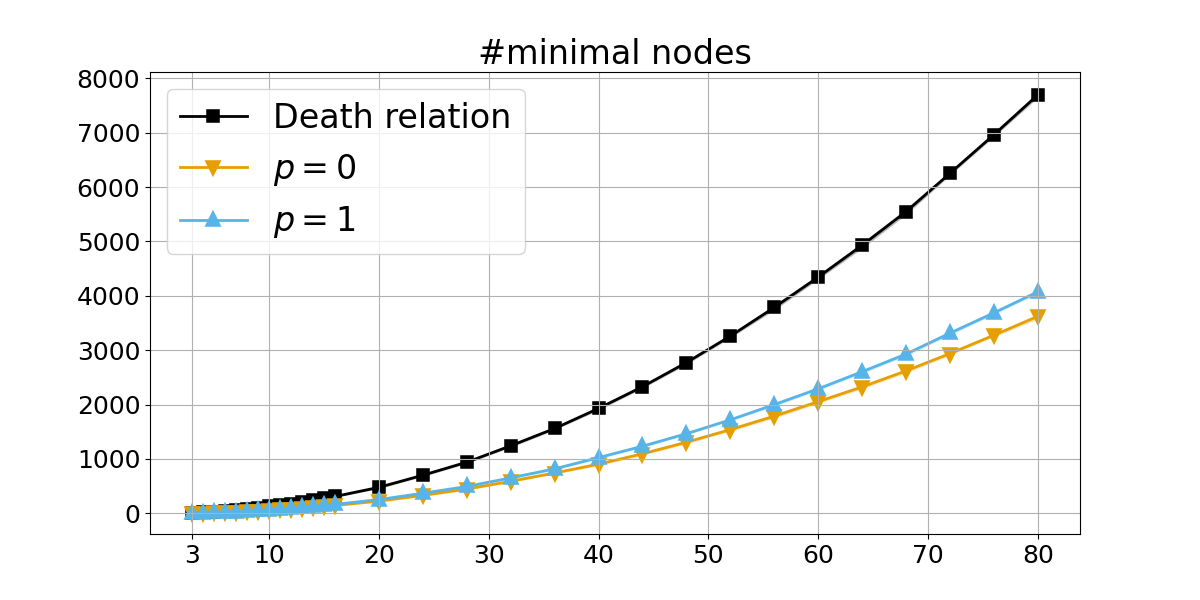}
  \hspace{-0.2in} \includegraphics[width=0.35\textwidth]{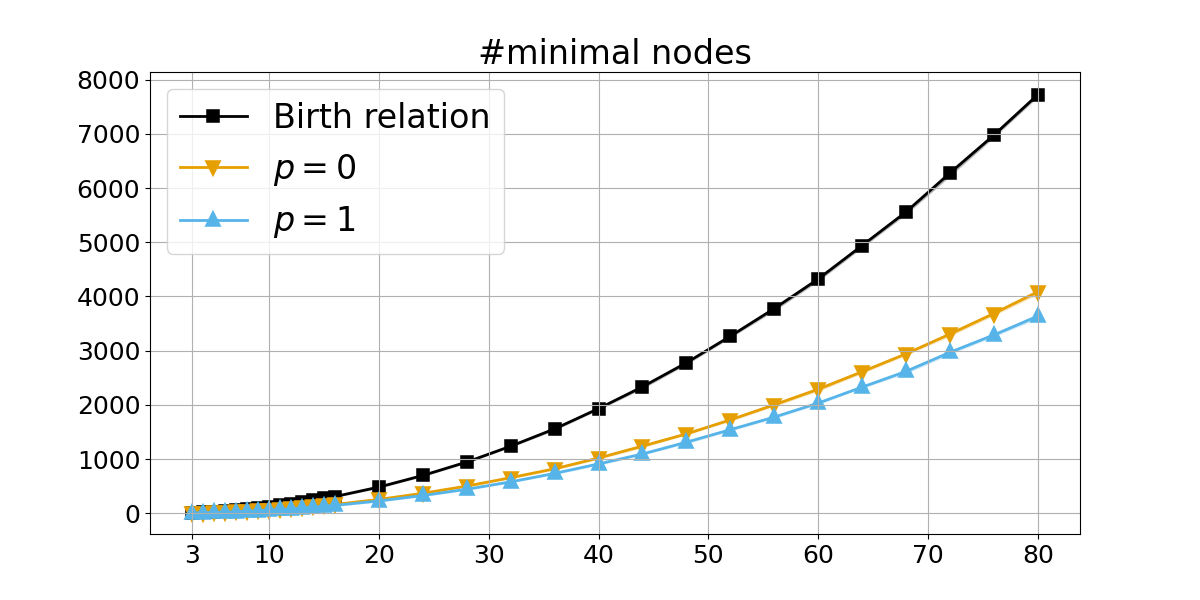} \\
  \vspace{-0.00in}
  \hspace{-0.1in} \includegraphics[width=0.35\textwidth]{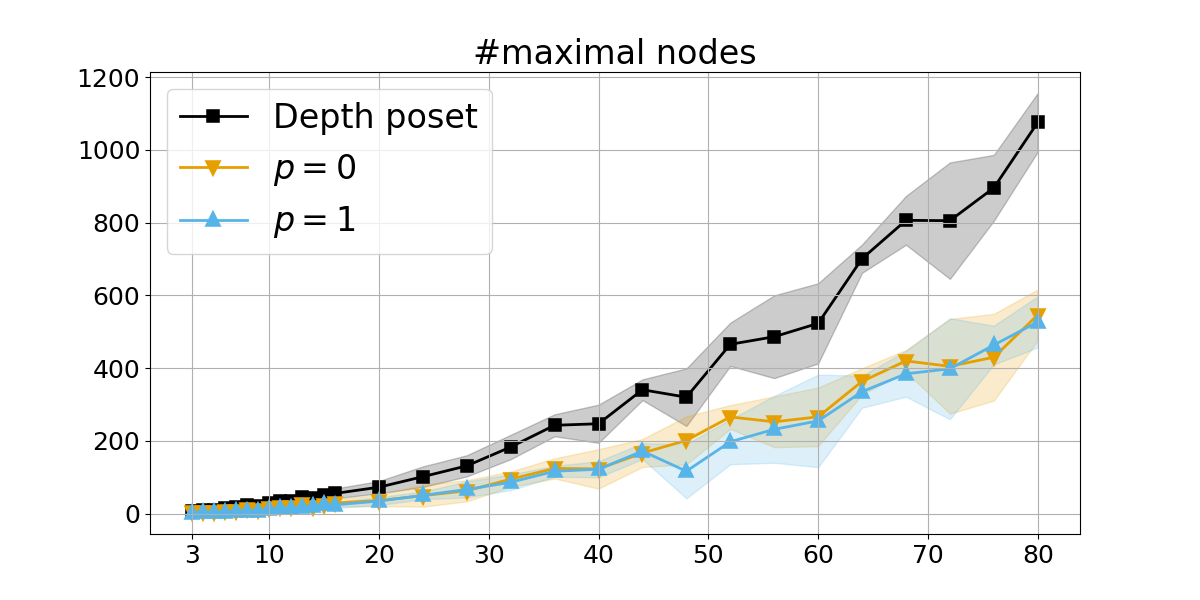}
  \hspace{-0.2in} \includegraphics[width=0.35\textwidth]{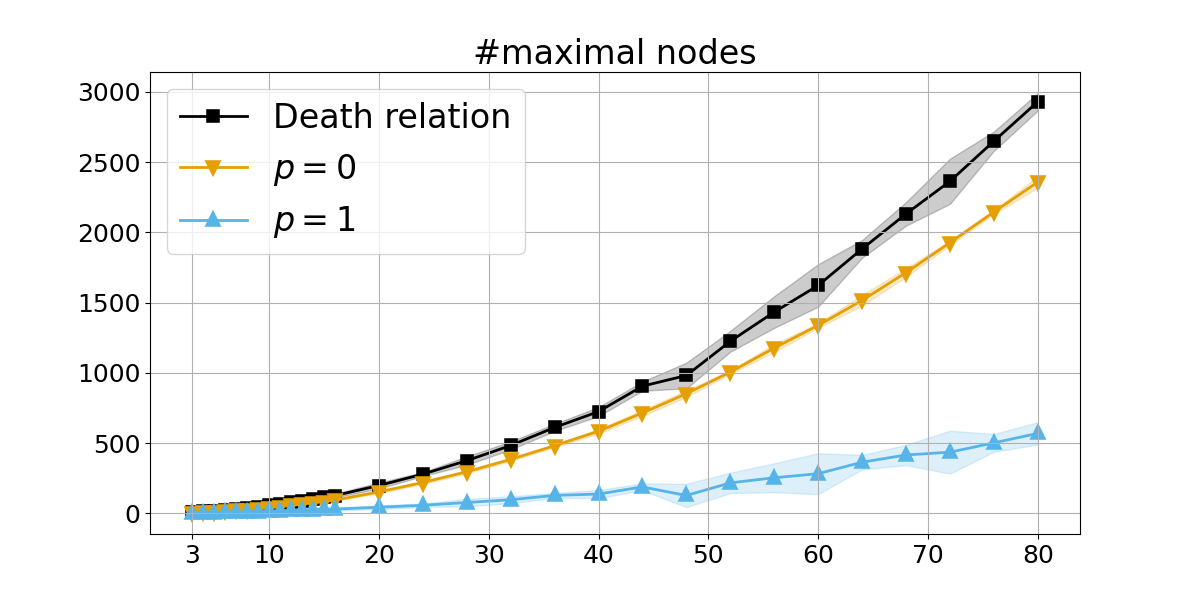}
  \hspace{-0.2in} \includegraphics[width=0.35\textwidth]{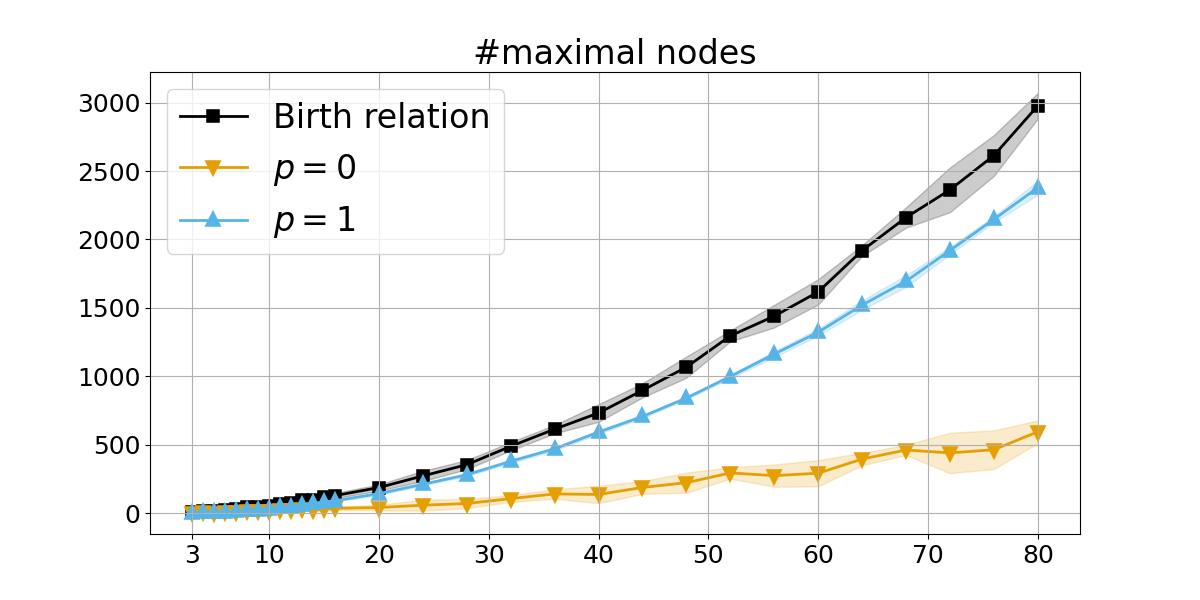} \\
  \vspace{-0.05in}
  \caption{\footnotesize \emph{From left to right:} the average numbers of components, minimal nodes, and maximal nodes of the depth poset, the death relation, and the birth relation of a random function on the $2$-torus, respectively.
  Because of symmetry, the curves for the death and birth relations are nearly identical.}
  \label{fig:components_min_max}
\end{figure}
By construction, the filter that orders the cells by value is also ordered by dimension.
We may think of $f$ as a piecewise constant function, but there is an alternative interpretation as a piecewise linear function.
Letting $\hat{K} = \hat{K} (n,d)$ be the barycentric subdivision of $K$, each vertex $\hat{\gamma} \in \hat{K}$ is the center of a cell $\gamma \in K$, and we map $\hat{\gamma}$ to $\hat{f} (\hat{\gamma}) = f(\gamma)$.
We then get $\hat{f}$ on the underlying space of $K$ by piecewise linear interpolation of its values at the vertices.
By construction, each center of a $p$-cell in $K$ is a critical point of index $p$ of $\hat{f}$.
All critical points are paired up as nodes of the depth poset, which implies that $n$ and $d$ determine the number of nodes for each $-1 \leq p \leq d$; see the upper row in Figure~\ref{fig:nodes_arcs}.
In contrast, the number of arcs in the depth poset has a small but non-zero variance; see the lower row of Figure~\ref{fig:nodes_arcs}.

\smallskip
As proved in \cite{ELMS25}, the depth poset splits by the dimensions of the nodes, but depending on the data it may split further.
The upper row in Figure~\ref{fig:components_min_max} shows that it indeed does so for random functions on the $2$-torus.
Each component has at least one minimal and one maximal node, but there may be more, and the middle and lower rows of the same figure support this possibility.
Recall that the minimal nodes of the depth poset are exactly the shallow pairs, which explains why their number has barely any variance.
In $d=2$ dimensions, there are $4 n^2$ incident vertex-edge pairs, and such a pair is shallow iff the edge precedes the other three incident edges of the vertex, and the vertex succeeds the other incident vertex of the edge.
Ignoring the extra $(-1)$-cell, the chance of such a pair to be shallow is therefore~ $\frac{1}{8}$.
Symmetrically, there are $4n^2$ incident edge-square pairs, and ignoring the extra $3$-cell, the chance of an incident edge-square pair to be shallow is again~$\frac{1}{8}$.
This implies that the expected number of shallow pairs is $n^2 + O(1)$; see the first panel in the middle row of Figure~\ref{fig:components_min_max}.
Each shallow pair is also a minimal node in the death and birth relations, but both have additional minimal nodes; see the symmetry of the curves in the second and third panels in the middle row of Figure~\ref{fig:components_min_max}, which is a consequence of the symmetry of cubically subdividing the torus.
We observe that there are many more minimal nodes than there are maximal nodes.
Curiously, the depth poset has many fewer maximal nodes than the death and birth relations, while the difference seems less dramatic for the minimal nodes.

By definition, a poset has no directed cycle, but it may have undirected cycles (pairs of paths that connect the same two nodes).
For a $1$-dimensional function, the death and birth relations have no undirected cycles either but, perhaps surprisingly, the union of these two relations can have such cycles.
Figure~\ref{fig:height_cycles} shows the average number of (undirected) cycles (which we compute as $\#{\rm components} + \#{\rm arcs} - \#{\rm nodes}$), together with the average height (defined as the number of arcs in the longest directed path in the relation). 
\begin{figure}[hbt]
  \centering
  \vspace{-0.05in}
  \hspace{-0.1in} \includegraphics[width=0.35\textwidth]{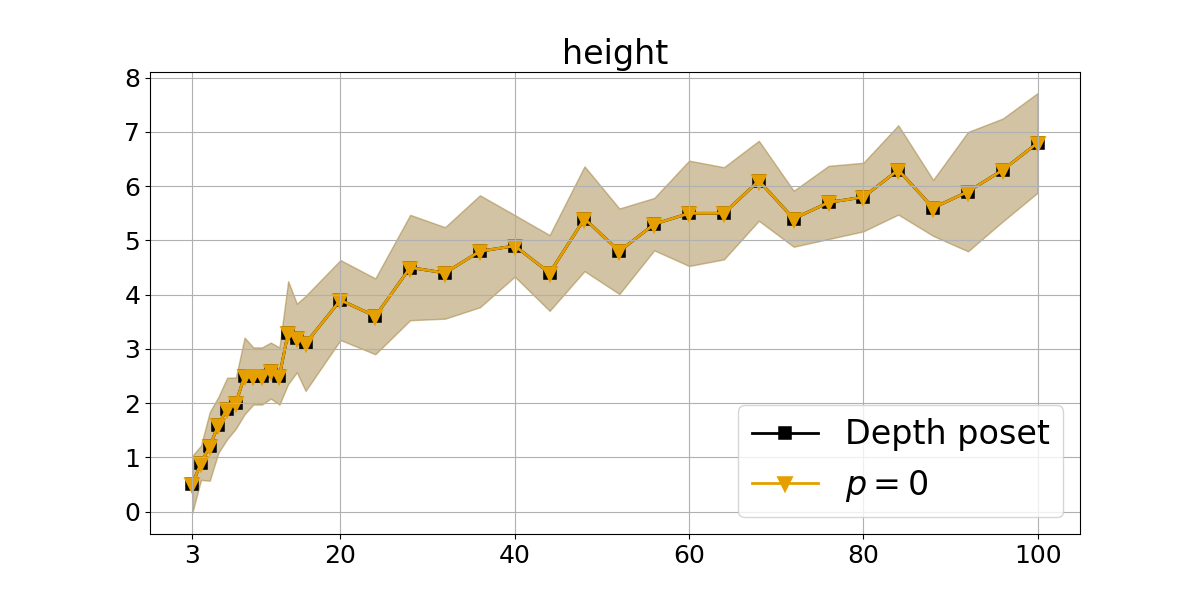}
  \hspace{-0.2in} \includegraphics[width=0.35\textwidth]{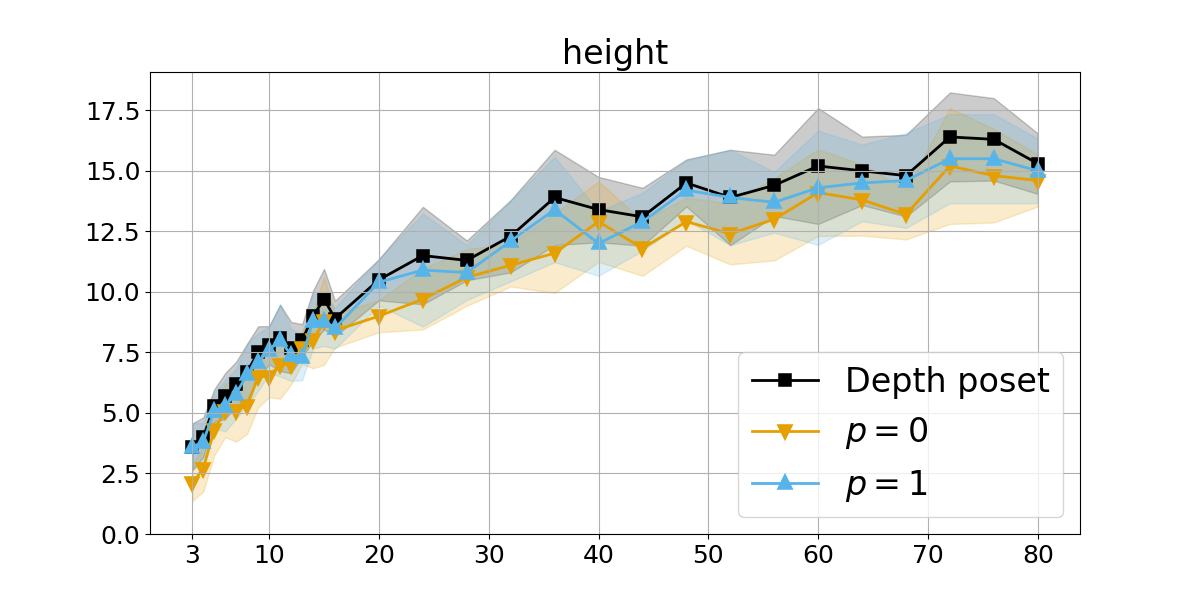}
  \hspace{-0.2in} \includegraphics[width=0.35\textwidth]{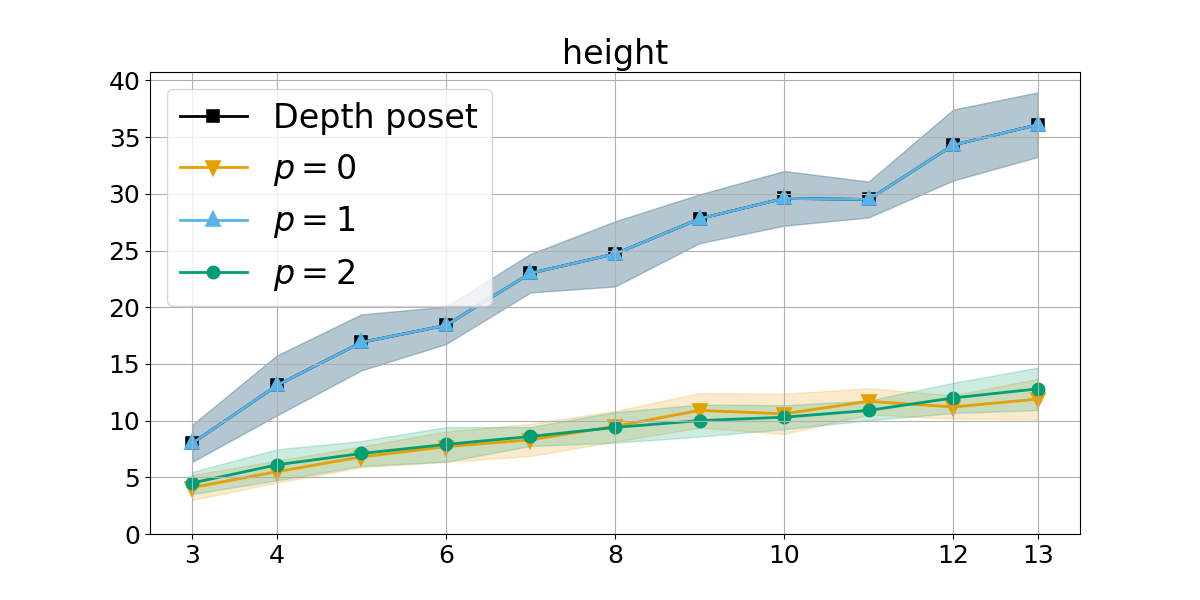} \\
  \vspace{-0.00in}
  \hspace{-0.1in} \includegraphics[width=0.35\textwidth]{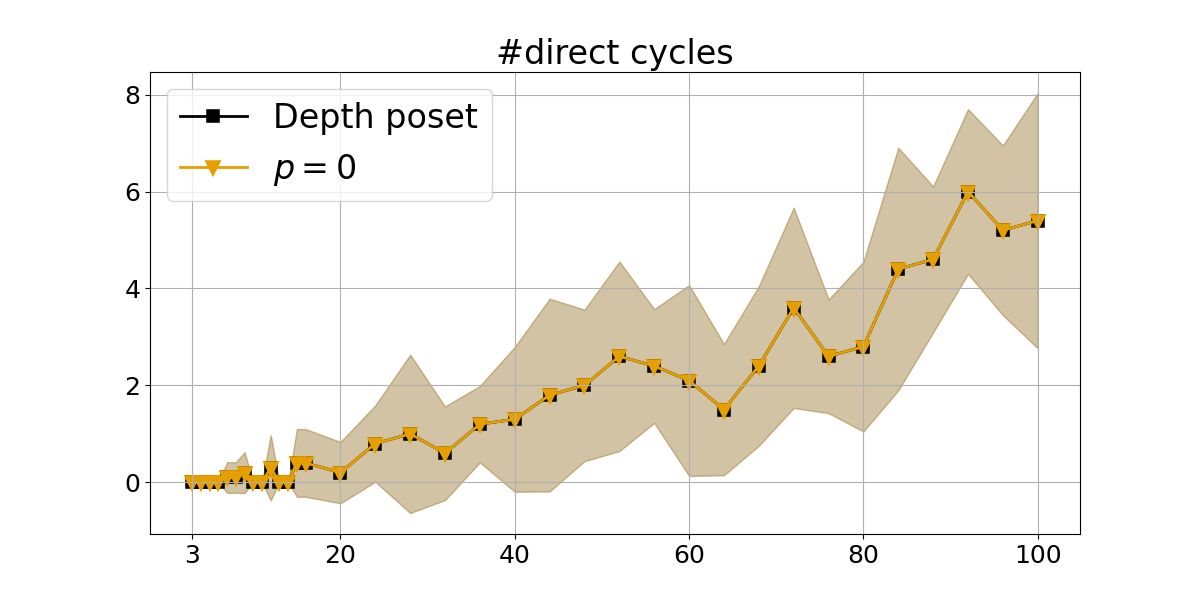}
  \hspace{-0.2in} \includegraphics[width=0.35\textwidth]{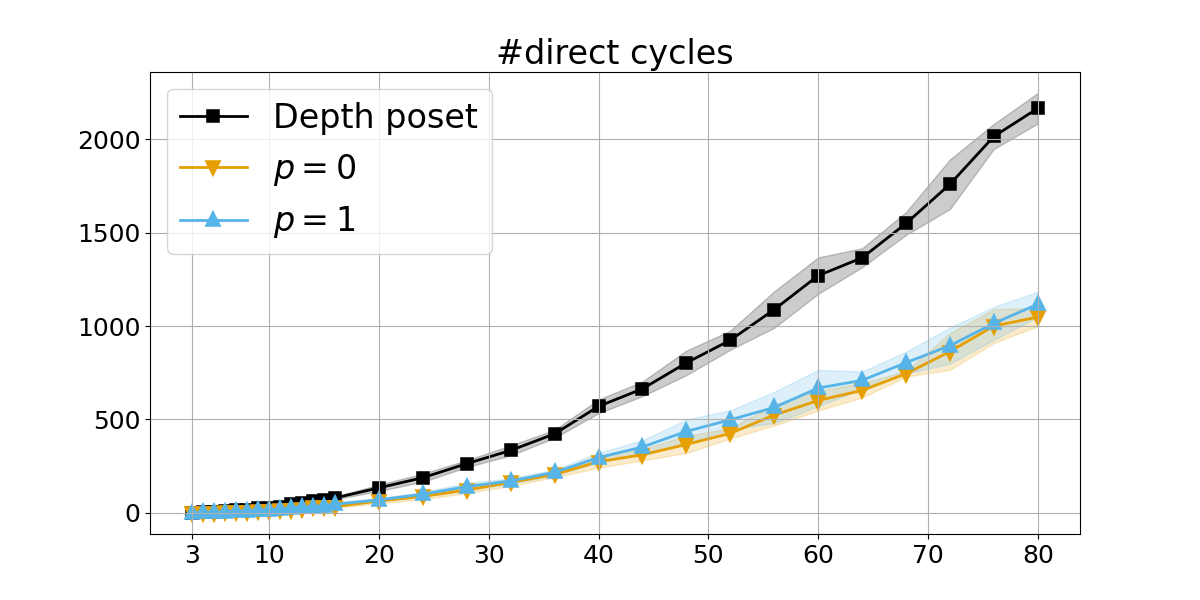}
  \hspace{-0.2in} \includegraphics[width=0.35\textwidth]{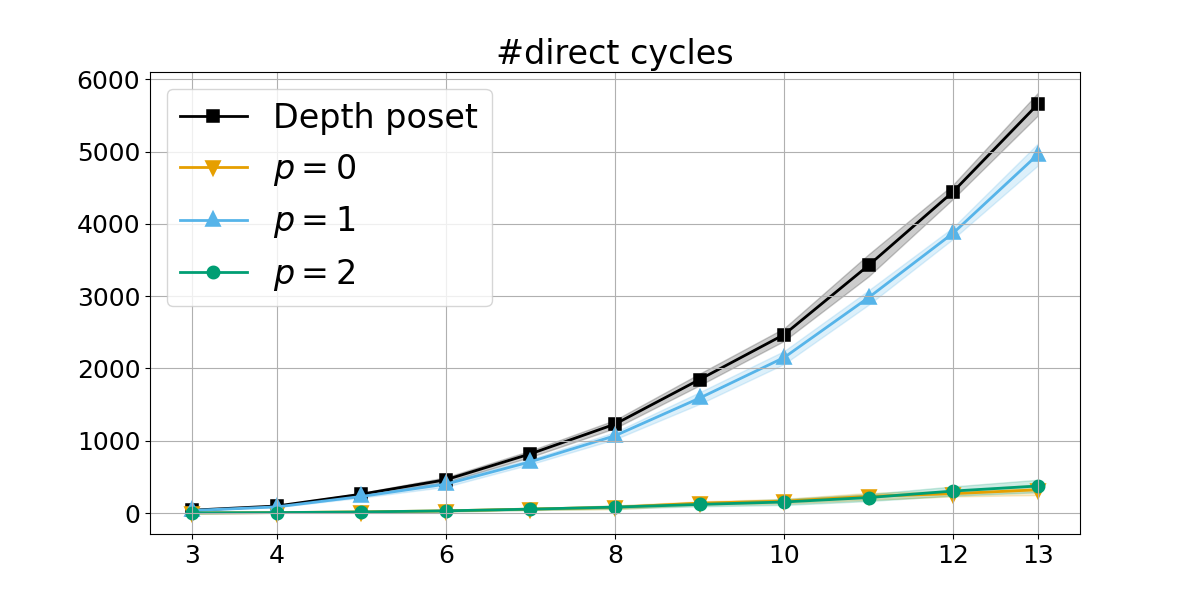} \\
  \vspace{-0.05in}
  \caption{\footnotesize \emph{From left to right:} the average height (\emph{upper row}) and the average number of undirected cycles (\emph{lower row}) in the depth poset of a random function on the $d$-torus, for $d = 1, 2, 3$, respectively.}
  \label{fig:height_cycles}
\end{figure}

\subsection{Straight-line Homotopies}
\label{sec:4.2}

Letting $f_0, f_1 \colon K \to \Rspace$ be two random functions on the $d$-torus, the \emph{straight-line homotopy} between them is the $1$-parameter family of functions $f_\lambda (\gamma) = (1-\lambda) f_0(\gamma) + \lambda f_1(\gamma)$, for all $\gamma \in K$ and $0 \leq \lambda \leq 1$.
To collect the statistics, we perform the transpositions of cells in sequence and one at a time.
Given two cells of not necessarily the same dimension, $\gamma, \eta \in K$, the parameter at which their values match is
\begin{align}
  \lambda (\gamma, \eta) &= \frac{f_0(\eta)-f_0(\gamma)}
                        {[f_0(\eta)-f_0(\gamma)]-[f_1(\eta)-f_1(\gamma)]}.
\end{align}
The relevant such parameter values are the ones in $[0,1]$.
Instead of sorting them, it is more efficient to maintain the filter, and at each step transpose the two cells whose values match next.
If this pair is not unique, we break ties and transpose the cells in sequence.
\begin{figure}[hbt]
  \centering
  \vspace{-0.00in}
  \hspace{-0.1in} \includegraphics[width=0.29\textwidth]{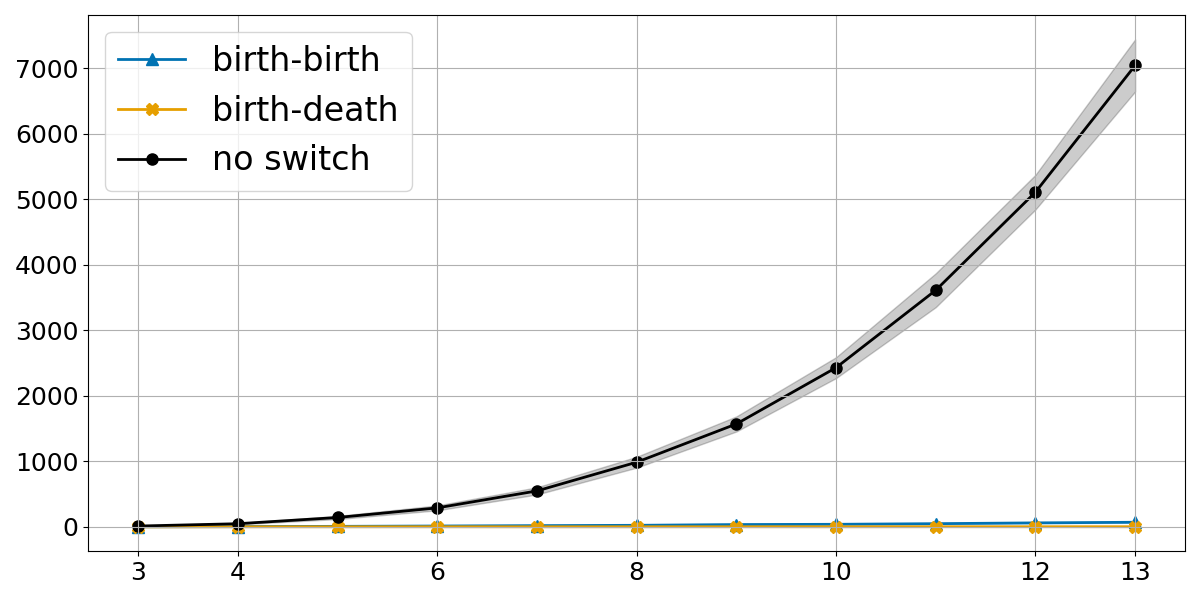}
  \hspace{0.04in} \includegraphics[width=0.29\textwidth]{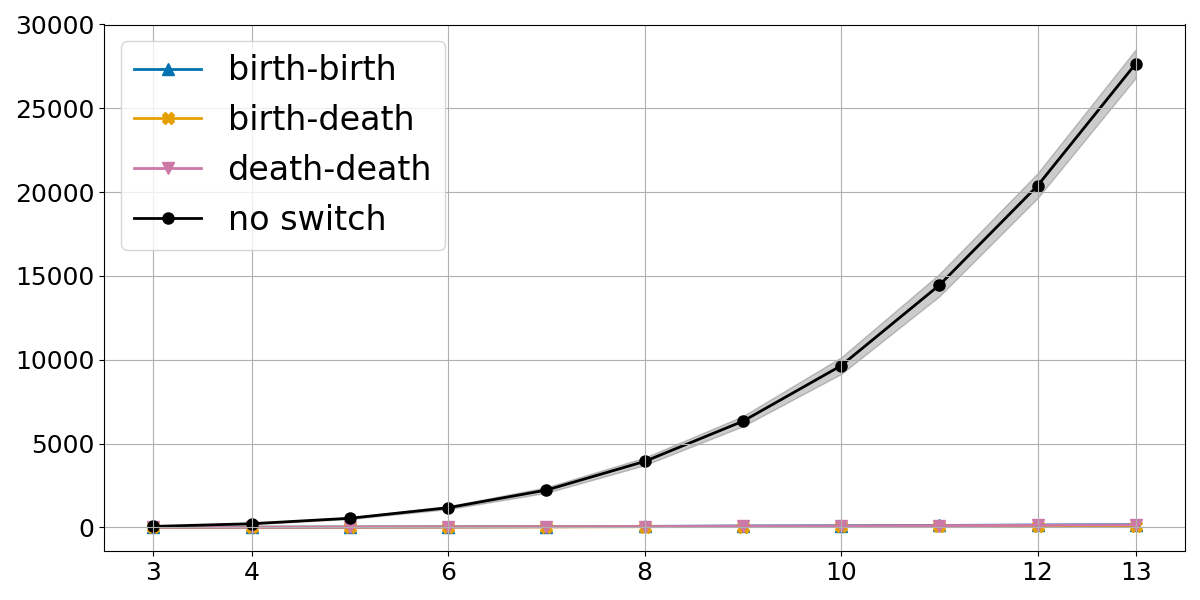}
  \hspace{0.08in} \includegraphics[width=0.29\textwidth]{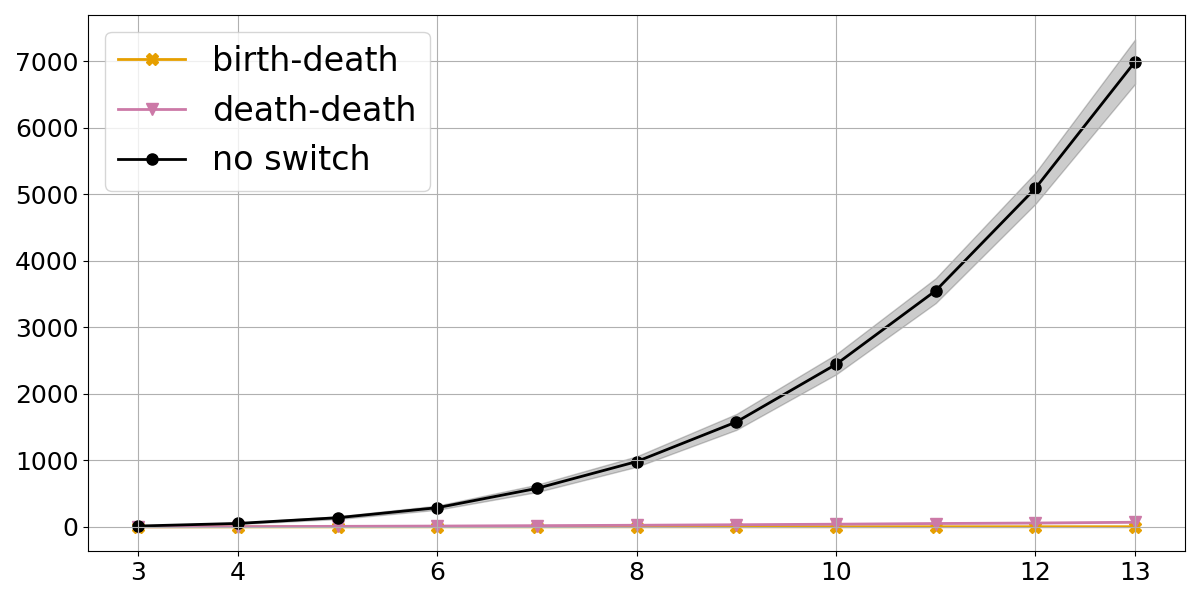} \\
  \vspace{-0.00in}
  \hspace{-0.07in} \includegraphics[width=0.28\textwidth]{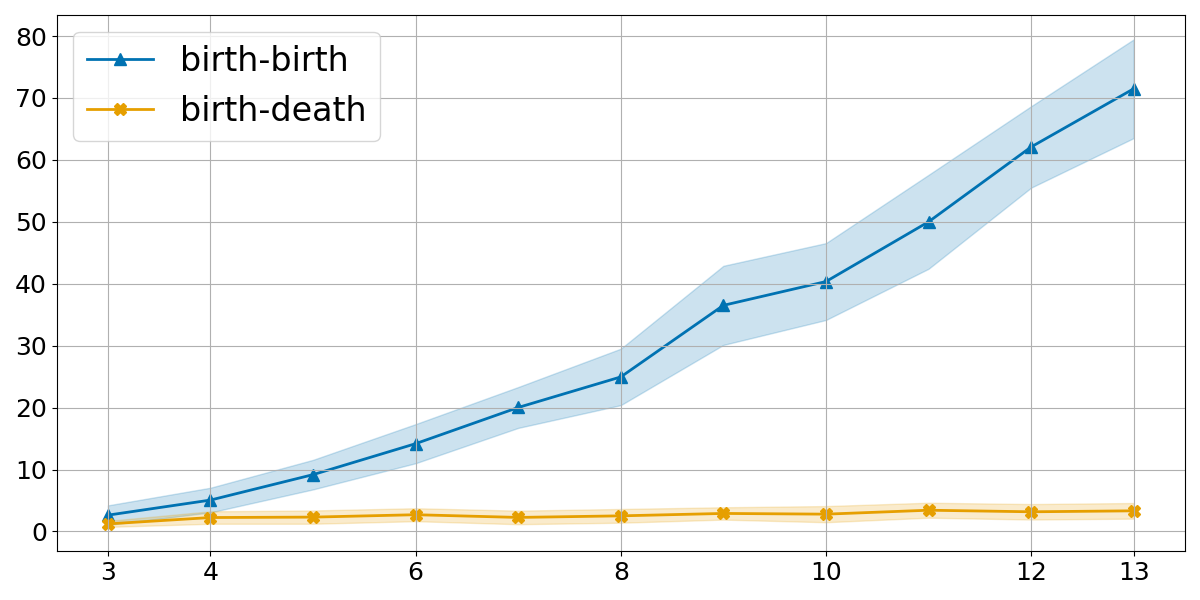}
  \hspace{0.11in} \includegraphics[width=0.28\textwidth]{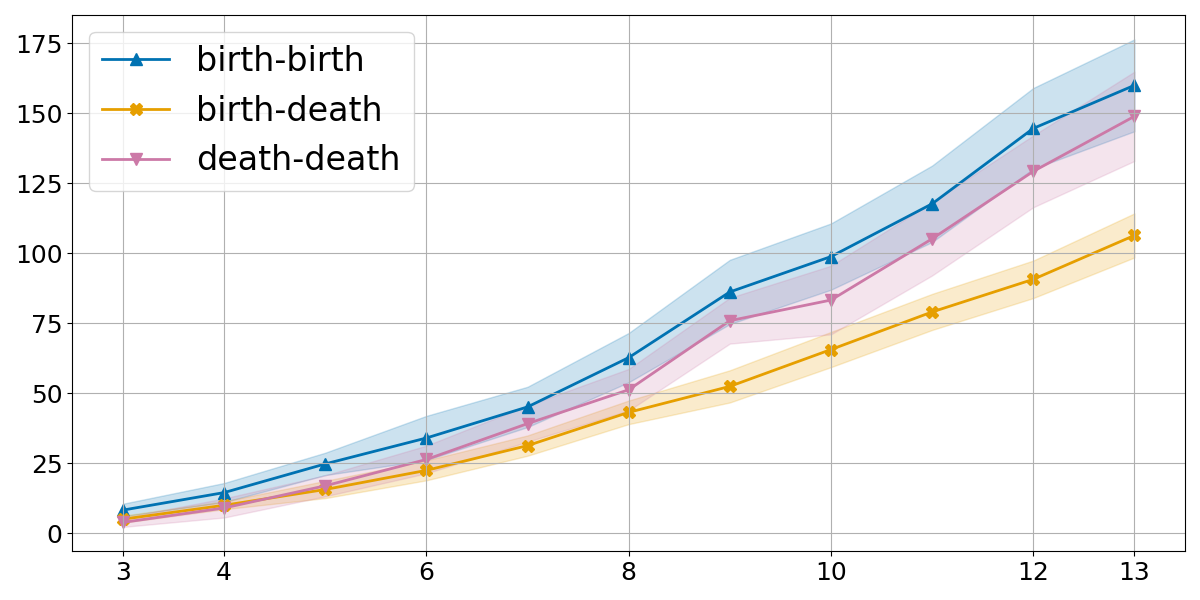}
  \hspace{0.13in} \includegraphics[width=0.28\textwidth]{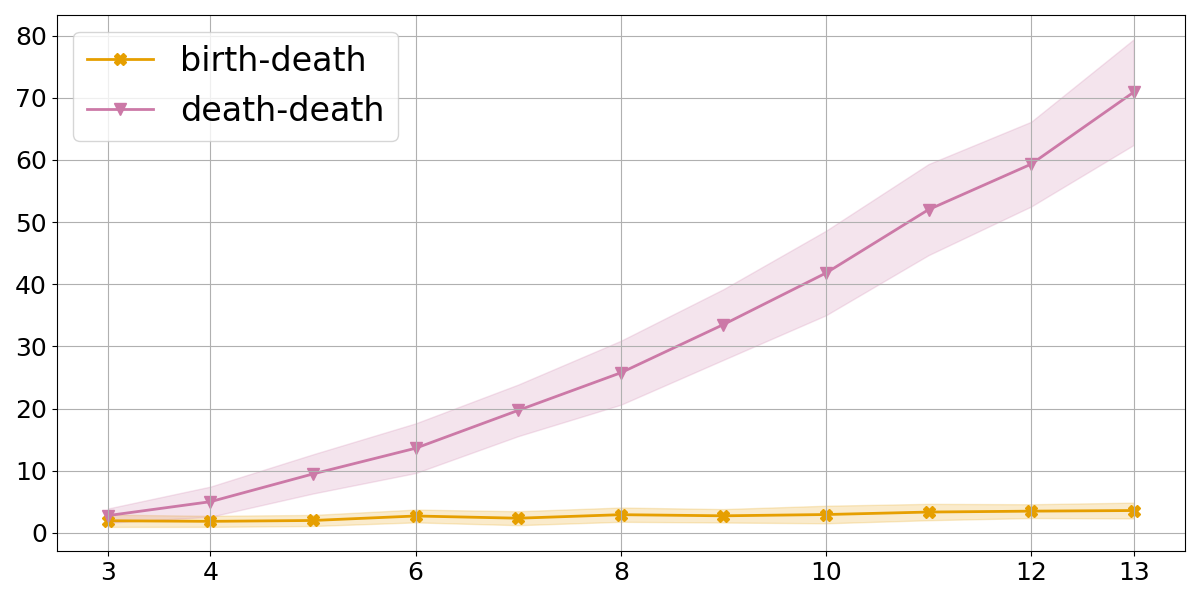} \\
  \vspace{-0.05in}
  \caption{\footnotesize \emph{From left to right:} the average number of transpositions (\emph{upper row}) and switches (\emph{lower row}) between pairs of vertices, edges, and $2$-cells for the straight-line homotopy between two random functions on the $2$-torus, respectively.}
  \label{fig:transposition}
\end{figure}
\begin{figure}[ht]
  \centering
  \vspace{-0.0in}
  \hspace{-0.1in} \includegraphics[width=0.28\textwidth]{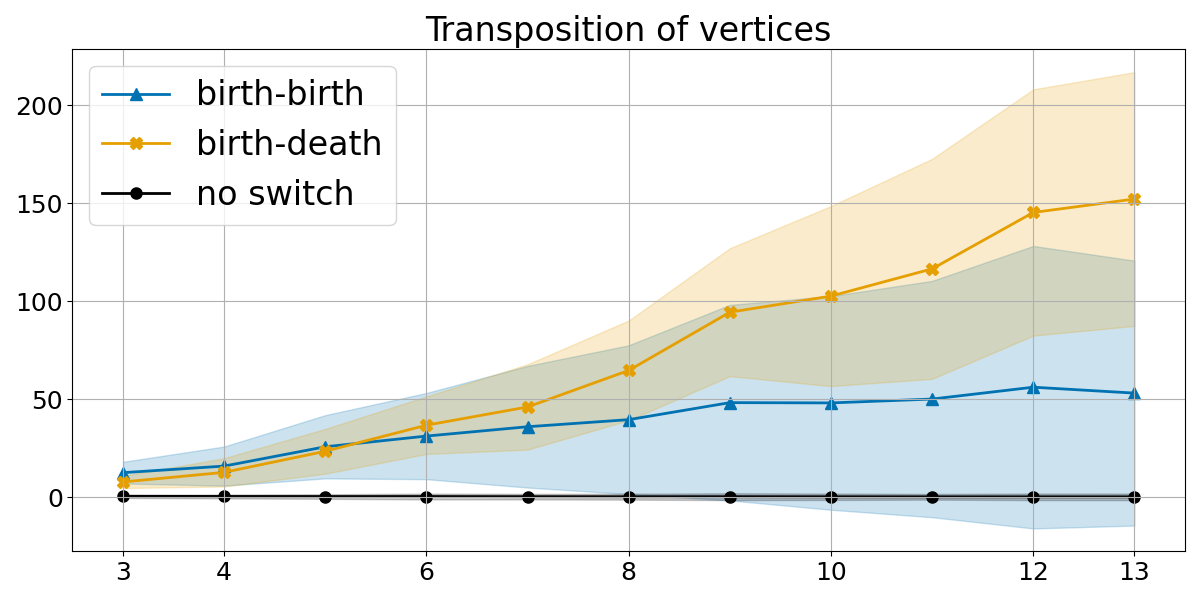}
  \hspace{0.1in} \includegraphics[width=0.28\textwidth]{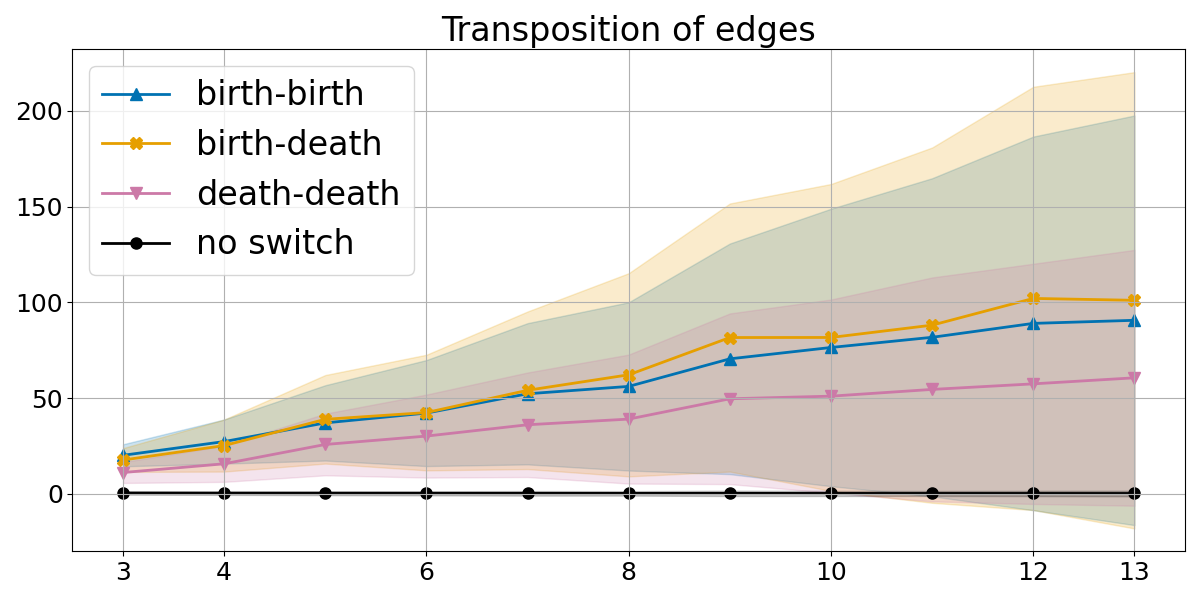}
  \hspace{0.1in} \includegraphics[width=0.28\textwidth]{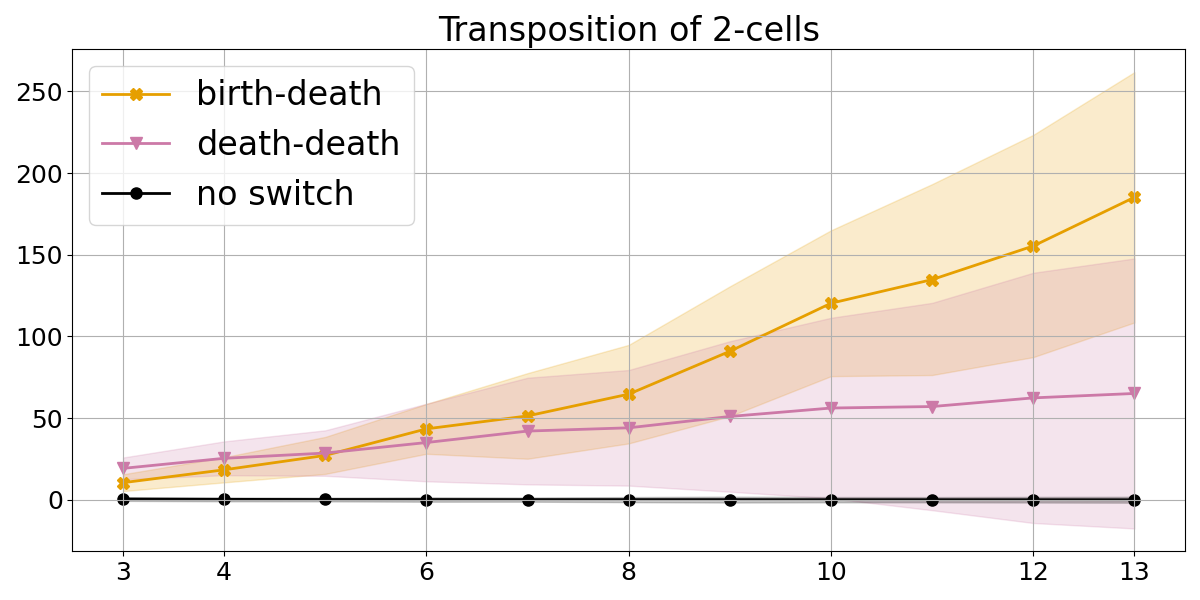} \\
  \vspace{-0.00in}
  \hspace{-0.1in} \includegraphics[width=0.28\textwidth]{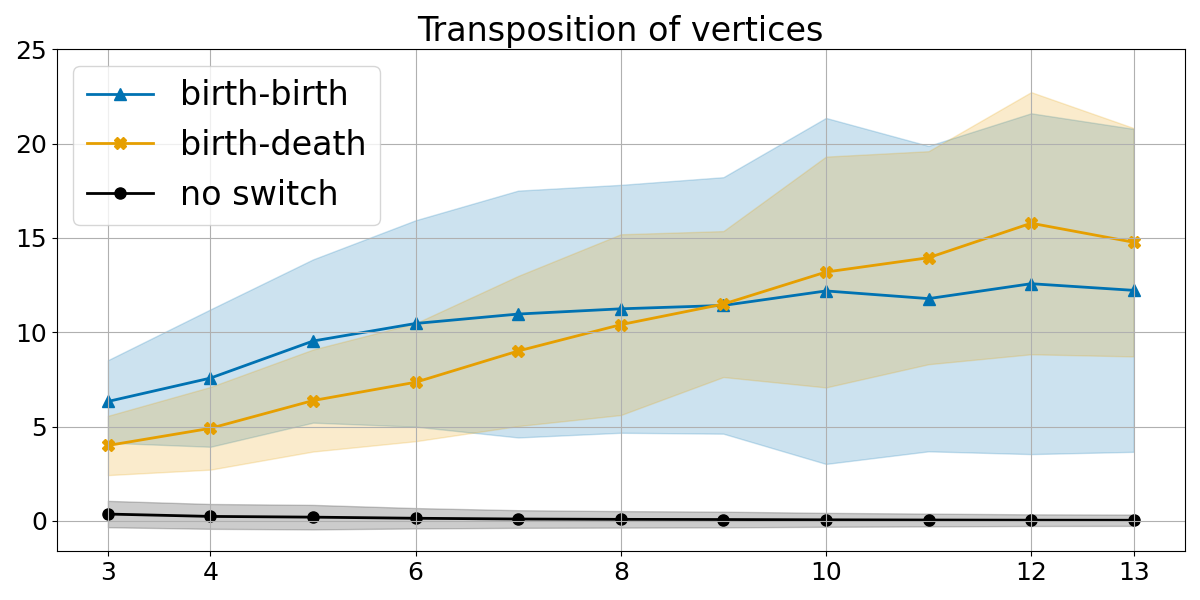}
  \hspace{0.1in} \includegraphics[width=0.28\textwidth]{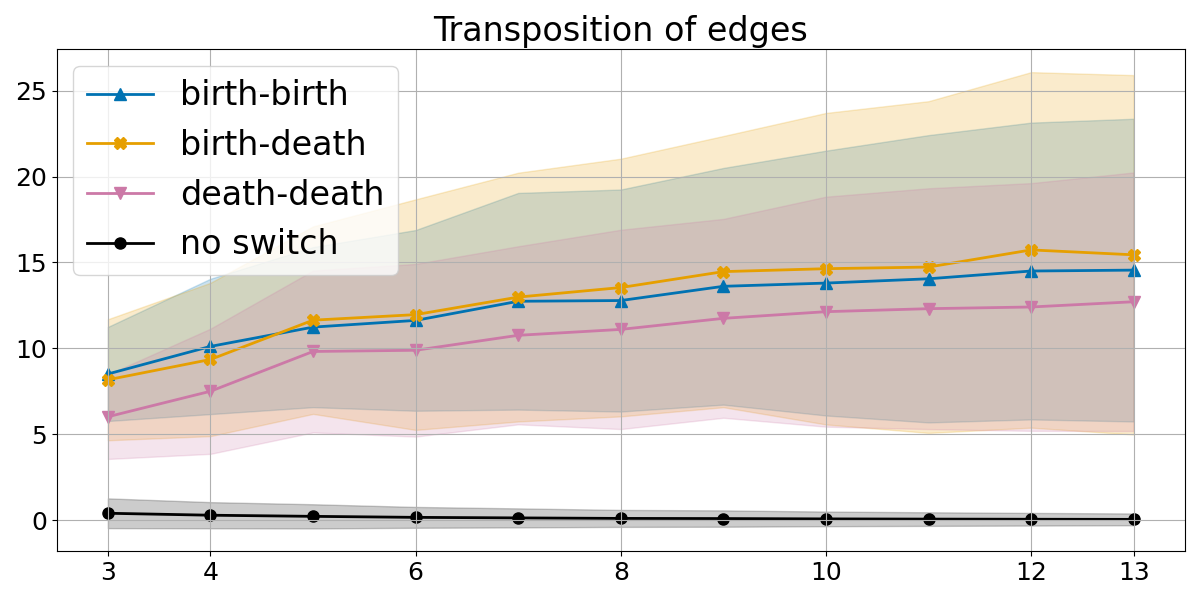}
  \hspace{0.1in} \includegraphics[width=0.28\textwidth]{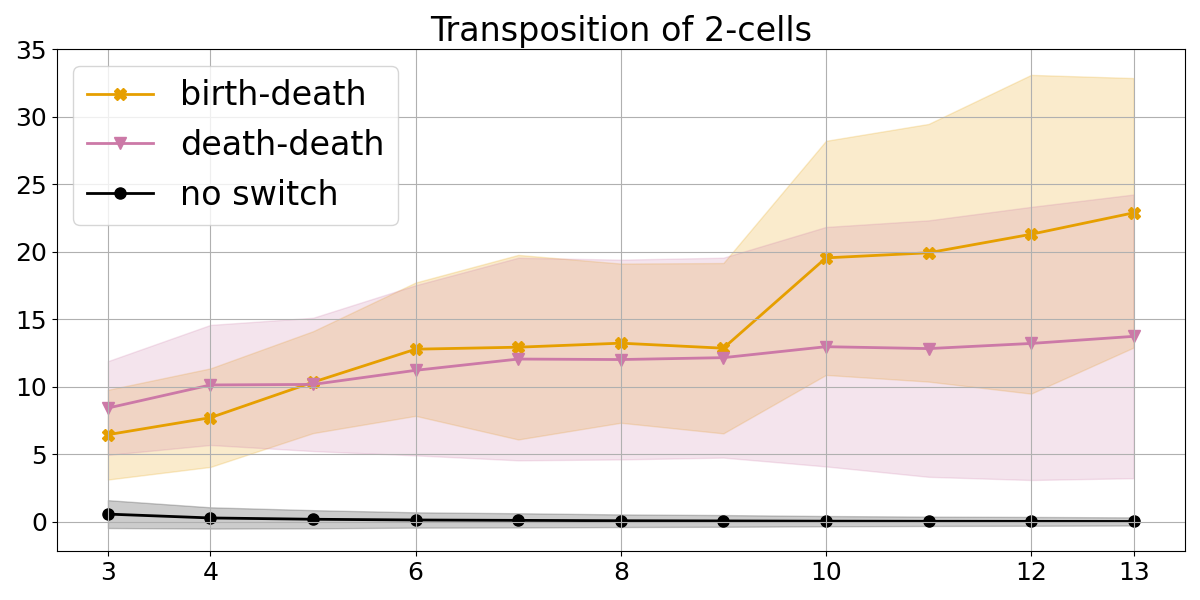} \\
  \vspace{-0.05in}
  \caption{\footnotesize \emph{From left to right}: the average number of arcs in the depth poset (\emph{upper row}) and in the transitive reduction of the depth poset (\emph{lower row}) that are different before and after the transposition of two vertices, two edges, and two $2$-cells during a straight-line homotopy between two random functions on the $2$-torus, respectively.
  We distinguish between different kinds of switches and non-switches.}
  \label{fig:changes}
\end{figure}
As shown in the upper row of Figure~\ref{fig:transposition}, the transpositions that are not switches outnumber those that are switches, so it is useful to draw the curves of the latter on a more appropriate scale, which we do in the lower row of the same figure.
As shown in the lower row of Figure~\ref{fig:transposition}, there are barely any BD-type switches of two vertices or two $2$-cells, and even for edges, there are fewer BD-type switches than switches of BB-type and switches of DD-type.

\smallskip
An interesting aspect of the transpositions is how much they change the depth poset.
Every switch affects exactly two nodes, while a transposition that is not a switch leaves all nodes unchanged.
The effect on the arcs is less predictable, so we measure the change by counting the arcs that are different before and after the transposition.
Not surprisingly, there are barely any arcs that change when the transposition is not a switch.
Also not surprisingly, there are significantly more arcs that change in the depth poset as compared to the transitive reduction of the poset.
Because of the limited size of the random functions in our experiment, it is difficult to gauge how the size of the symmetric difference of the arcs before and after the transposition grows with the size of the function.
In any case, it seems to grow much slower than the total number of arcs, which we observe to grow quadratically in $n$; see the middle panel in the lower row of Figure~\ref{fig:nodes_arcs}.

\section{Discussion}
\label{sec:5}

The main contribution of this paper is the complete analysis of how transpositions of cells in the filter affect the depth poset of a filtered Lefschetz complex.
This analysis provides further evidence for the relative significance of birth-death switches, as they mark a more extensive re-organization of the persistent homology than all other types of transpositions.
Our initial computational experiments suggest that the birth-death switches are more rare than all other types, and that the number of arcs that change grows much slower than the total number of arcs in the depth poset.
This work calls for further research in at least the following two directions:
\smallskip \begin{itemize}
  \item Extend the notion of a depth poset from discrete Morse functions to discrete vector fields as introduced in Forman~\cite{For98b}, or even to multi-vector fields as studied in \cite{LKMW23,Mro17}.
  \item Study the depth poset stochastically for various models of random functions and for homotopies between them.
\end{itemize} \smallskip
We finally mention that the algorithms in this paper and its precursor \cite{ELMS25} are closely related to the algorithms for topology optimization described by Nigmetov and Morozov~\cite{NiMo24}.
Can the more global topological information provided by the depth poset be harvested to get faster or better methods for topology optimization?


\newpage

\end{document}